\documentclass[12pt]{article}
\usepackage{amssymb,amsmath,amsthm,enumerate}

\addtolength{\textwidth}{2cm}
\addtolength{\textheight}{3cm}
\addtolength{\oddsidemargin}{-1cm}
\addtolength{\topmargin}{-2cm}

\newcommand{\Z}{{\mathbb Z}}
\newcommand{\X}{{\mathbb X}}

\newcommand{\A}{\mathcal{A}}
\newcommand{\I}{\mathcal{I}}

\newcommand{\OO}{\mathcal{O}}
\newcommand{\N}{\mathbb{N}}
\newcommand{\Q}{{\mathbb Q}}
\newcommand{\F}{{\mathbb F}}
\newcommand{\M}{\mathcal{M}}

\renewcommand{\a}{\mathfrak{a}}

\newcommand{\z}{\zeta}
\newcommand{\x}{\vec{x}}

\newcommand{\vomega}{\vec{\omega}}
\newcommand{\vOmega}{\vec{\Omega}}
\newcommand{\vepsilon}{\vec{\epsilon}}
\newcommand{\vE}{\vec{E}}
\newcommand{\onebar}{\overline{1}}
\newcommand{\Rbar}{\overline{R}}
\newcommand{\Jbar}{\overline{J}}

\newcommand{\gammabar}{\overline{\gamma}}
\newcommand{\alphabar}{\overline{\alpha}}

\newcommand{\deltabar}{\overline{\delta}}

\newcommand{\Gbar}{\overline{G}}
\newcommand{\Lambdabar}{\overline{\Lambda}}
\newcommand{\ra}{\rightarrow}
\newcommand{\lra}{\longrightarrow}
\newcommand{\dst}{\displaystyle}

\newcommand{\GL}{\rm{GL}}
\DeclareMathOperator{\ch}{char}
\DeclareMathOperator{\Tr}{Tr}
\DeclareMathOperator{\Span}{Span}
\DeclareMathOperator{\Gal}{Gal}
\DeclareMathOperator{\Ann}{Ann}

\newtheorem{theorem}{Theorem}
\newtheorem{lemma}[theorem]{Lemma}
\newtheorem{prop}[theorem]{Proposition}
\newtheorem{cor}[theorem]{Corollary}

\theoremstyle{definition}
\newtheorem{remark}[theorem]{Remark}
\newtheorem{definition}[theorem]{Definition}

\numberwithin{equation}{section}
\numberwithin{theorem}{section}

\title{Refined ramification breaks in characteristic
$p$}
\author{G. Griffith Elder \\
Department of Mathematics \\
University of Nebraska Omaha \\
Omaha, NE 68182 \\
USA \\[.2cm]
{\tt elder@unomaha.edu}
\and
Kevin Keating \\
Department of Mathematics \\
University of Florida \\
Gainesville, FL 32611 \\
USA \\[.2cm]
{\tt keating@ufl.edu}}

\begin{document}

\maketitle

\begin{abstract}
Let $K$ be a local field of characteristic $p$ and let
$L/K$ be a totally ramified elementary abelian
$p$-extension with a single ramification break $b$.
Byott and Elder defined the refined ramification breaks
of $L/K$, an extension of the usual ramification data.
In this paper we give an alternative definition for the
refined ramification breaks, and we use Artin-Schreier theory to
compute both versions of the breaks in some special
cases.
\end{abstract}

\section{Introduction}

Let $K$ be a local field whose residue field $\F$ is a perfect field
of characteristic $p$ and let $L/K$ be a finite totally ramified
Galois extension.  Let $G=\Gal(L/K)$ and set $[L:K]=d=ap^n$, with
$p\nmid a$.  Then the extension $L/K$ has at most $n$ positive lower
ramification breaks.  In certain cases (for instance, if $G$ is
cyclic) $L/K$ must have exactly $n$ positive ramification breaks.
When $L/K$ has fewer than $n$ positive ramification breaks one might
hope to replace the missing breaks with some other information.

One
attempt to supply the missing information was made by Fried
\cite{fried} and Heiermann \cite{heier}, who defined a set of data
which Heiermann called the ``indices of inseparability'' of $L/K$.
The indices of inseparability are equivalent to the usual ramification
data in the case where $L/K$ has $n$ positive ramification breaks, and
provide new information when $L/K$ has fewer than $n$ positive breaks.

Now consider the extreme situation where $L/K$ has a
single ramification break $b$, with $b>0$.  Then $G\cong
C_p^n$ for some $n\ge1$, where $C_p$ denotes the cyclic group of order
$p$ \cite[III,\,Th.\,4.2]{FV}.  In this setting Byott and Elder
\cite{new,necessity} defined ``refined ramification breaks'' for $L/K$
in terms of the action of $G$ on $L$:  If $\ch(K)=p$ set
$R=\F[G]$, while
if $\ch(K)=0$ let $R=W[G]$, where $W$ is the ring of Witt vectors over $\F$.  In
either case let $A$ denote the augmentation ideal of $R$.  Using
``truncated exponentiation'', the group $(1+A)/(1+A^p)$ can be given
the structure of a vector space over the residue field $\F$.  The
image of $G$ in this group spans an $\F$-vector space $\Gbar^{[\F]}$
of dimension $n$.  By considering the action of (coset representatives
of) elements of $\Gbar^{[\F]}$ on elements $\rho\in L$ one can define
new ramification breaks for $L/K$.  An early observation was that $n$
refined ramification breaks are produced if $\rho$ generates a normal
basis for $L/K$ \cite[Theorem 3.3]{new}, although it was unknown
how the values of these breaks depend upon the particular normal
basis generator chosen.

In \cite{necessity} Byott and Elder focused on the case where
$\ch(K)=0$, $K$ contains a primitive $p$th root of unity, and $G\cong
C_p^n$ with $n=2$.  In \cite{proc-lms}, it had
been observed that elements whose valuation is congruent to $b$ modulo
$p^n$ satisfy a ``valuation criterion'': any $\rho\in L$
such that $v_L(\rho)\equiv b\pmod{p^n}$ is
a normal basis generator for $L/K$.  For this reason, the refined
ramification breaks in \cite{necessity} were defined in terms of the
action of $\Gbar^{[\F]}$ on valuation criterion elements of $L$.
Byott and Elder used Kummer theory to calculate the values of the two
refined ramification breaks, and showed that these values are
independent of choice of valuation criterion element.  They also
showed that in certain cases these new breaks give information about
the Galois module structure of $L$.  It remains an open question
whether the values of the refined ramification breaks are independent
of the choice of valuation criterion element for totally
ramified $C_p^n$-extensions with a single ramification
break when $n\ge3$.
     
In this paper we once again consider totally ramified
$C_p^n$-extensions $L/K$ with a single ramification break $b>0$.  We
propose a new definition for the refined ramification breaks of $L/K$
which depends on the action of $\Gbar^{[\F]}$ on all of $L$, rather
than just on the valuation criterion elements.  This definition has
the advantage of being independent of all choices, and gives breaks
which are ``necessary'' for Galois module structure, as in
\cite{necessity} (see \S\ref{concluding}).  It has the
disadvantage that it is not obvious that it produces $n$ distinct
breaks.  We apply these definitions to a certain class of elementary
abelian $p$-extensions in characteristic $p$.  This class includes all
$C_p^2$-extensions with a single ramification break as well as the
``one-dimensional'' extensions from \cite{scaf} with a single
ramification break.  For the extensions in this class, we use the
results of \cite{large} to show that the two definitions of refined
ramification breaks give the same values, and then compute these
values in terms of Artin-Schreier equations.  In
Remark~\ref{semistable} another sufficient condition, due to Bondarko \cite{bon2}, is given for the
two definitions of refined ramification break to be equivalent.  We do
not know whether the two definitions for refined ramification breaks
are equivalent more generally.

The authors thank Nigel Byott for his careful reading of the paper,
and for asking about the statement that has become
Proposition~\ref{scafr}.

\section{Refined ramification breaks} \label{ref-brk-defn}

Let $K$ be a local field of characteristic $p$ with perfect residue
field $\F$, and let $L/K$ be a totally ramified $C_p^n$-extension with
a single ramification break $b>0$.  In this section we give two
definitions for the refined ramification breaks (or refined breaks) of
$L/K$.  Our definition of VC-refined breaks (where VC
stands for ``valuation criterion'') is essentially the
same as the definition of refined breaks given in \cite{necessity}. As
mentioned in the introduction, for any valuation
criterion element $\rho$
this definition is guaranteed to produce $n$ distinct refined breaks,
but we do not know that the values of the refined breaks are
independent of the choice of $\rho$.  Our definition of
SS-refined breaks (where SS stands for ``smallest
shift'') differs from the definition in \cite{necessity}
in that it depends on the action of $\F[G]$ on all the elements of
$L$, not just the action on valuation criterion elements. The values
of the refined breaks produced by this definition are independent of
all choices, but it is an open question whether the definition always
produces a full complement of $n$ refined breaks.  Each definition
comes in different versions which depend on a parameter $k$ satisfying
$2\le k\le p$.  The VC$_k$-refined breaks and the SS$_k$-refined
breaks are defined using cosets of $A^k$, where $A$ is the
augmentation ideal of $R=\F[G]$.  These various definitions are not
obviously equivalent, but in Corollary~\ref{same} we
give sufficient conditions for the set of VC$_k$-refined
breaks to be equal to the set of SS$_k$-refined breaks,
and in Theorem~\ref{main} we give stronger conditions
under which these sets are independent of $k$.  It would
certainly be useful to have a better understanding of
when our various sets of refined breaks are the same and
when they differ.  In all the examples we are able to
compute, the sets of VC$_k$- and SS$_k$-refined breaks
are equal and independent of $k$.  Thus it would be
interesting  to find an example of an extension
$L/K$ for which, say, the VC$_k$-refined breaks are
different from the VC$_{k'}$-refined breaks for some
$2\le k<k'\le p$.

Since the residue field $\F$ of $K$ is perfect, we have
$K\cong\F((t))$.  Let $v_L:L\ra\Z\cup\{\infty\}$ be the normalized
valuation on $L$.  Then $\OO_L=\{x\in L:v_L(x)\ge0\}$ is the ring of
integers of $L$ and $\M_L=\{x\in L:v_L(x)\ge1\}$ is the maximal ideal
in $\OO_L$.  Since $G=\Gal(L/K)$ is an elementary abelian $p$-group of
rank $n$, $L$ is the compositum of $n$ fields $L_1,\ldots,L_n$ which
are cyclic degree-$p$ extensions of $K$.  Hence for $1\le i\le n$
there is $\alpha_i\in K$ such that $L_i$ is the splitting field of the
Artin-Schreier polynomial $X^p-X-\alpha_i$.  Since $b$ is the unique
ramification break of $L_i/K$, we may assume that $v_K(\alpha_i)=-b$.
Let $\beta\in K$ satisfy $v_K(\beta)=-b$.  Then there are
$\omega_1,\ldots,\omega_n\in\F$ and $\epsilon_1,\ldots,\epsilon_n\in
K$ such that $v_K(\epsilon_i)>-b$ and
$\alpha_i=\omega_i^{p^n}\beta+\epsilon_i$ for $1\le i\le n$.
Furthermore, since $b$ is the only ramification break of
$L/K$, the coefficients
$\omega_1,\ldots,\omega_n\in\F$ must be linearly independent over
$\F_p$.

     Let $\wp:K\ra K$ be the Artin-Schreier map,
defined by $\wp(x)=x^p-x$.  By replacing $\epsilon_i$
with $\epsilon_i'\in\epsilon_i+\wp(K)$ we may assume
that either $v_L(\epsilon_i)<0$ and
$p\nmid v_L(\epsilon_i)$, or $\epsilon_i\in\F$.  Set
$e_i=-v_L(\epsilon_i)$; if $\epsilon_i\not=0$ then $0\le e_i<b$.
Also define
\[\vomega=\begin{bmatrix}\omega_1\\ \vdots\\ \omega_n
\end{bmatrix}
\hspace{1cm}
\vepsilon=\begin{bmatrix}\epsilon_1\\ \vdots\\
\epsilon_n
\end{bmatrix}.\]
We say that $(\beta,\vomega,\vepsilon)$ is
Artin-Schreier data for the $C_p^n$-extension $L/K$.
Of course, $(\beta,\vomega,\vepsilon)$ is not uniquely
determined by $L/K$, but $(\beta,\vomega,\vepsilon)$
does determine $L$ as an extension of $K$.  By choosing
$\beta=\alpha_1$ we may assume that $\omega_1=1$ and
$\epsilon_1=0$.

Define the truncated exponential and truncated logarithm polynomials
by
\begin{align*}
e_p(X)&=\sum_{i=0}^{p-1}\,\frac{1}{i!}X^i \\
\ell_p(X)&=\sum_{i=0}^{p-1}\,\frac{(-1)^i}{i!}(X-1)^i.
\end{align*}
Note that $e_p(X)$ is not the same as the ``truncated
exponentiation'' used in \cite{new,necessity,large}.
Since the congruences
\begin{alignat}{2}
\ell_p(e_p(X))&\equiv X&&\pmod{X^p} \nonumber \\
e_p(\ell_p(1+X))&\equiv1+X&&\pmod{X^p} \nonumber \\
e_p(X+Y)&\equiv e_p(X)e_p(Y)&&\pmod{(X,Y)^p}
\nonumber \\
\ell_p((1+X)(1+Y))&\equiv\ell_p(1+X)+\ell_p(1+Y)
&&\pmod{(X,Y)^p} \label{lpXY}
\end{alignat}
are valid in $\Q[X,Y]$, and involve polynomials
with coefficients in $\Z_{(p)}$, they are valid over
$\F_p\cong\Z_{(p)}/p\Z_{(p)}$,
and hence also over $\F$ and over $K$.

     For $\omega\in K$ and $1\le i<p$ define
\[\binom{\omega}{i}=
\frac{\omega(\omega-1)(\omega-2)\dots(\omega-(i-1))}{i!}
\in K.\]
Also define $\dst\binom{\omega}{0}=1$ and
$\dst\binom{\omega}{-1}=0$.  Let $\psi(X)\in XK[X]$.
Following \cite[1.1]{new} we define the truncated
$\omega$ power of $1+\psi(X)$ to be the polynomial
\[(1+\psi(X))^{[\omega]}
=\sum_{i=0}^{p-1}\binom{\omega}{i}\psi(X)^i\] obtained by truncating
the binomial series.  This is what was called ``truncated
exponentiation'' in \cite{new,necessity,large}. We have the following
(cf.\ \cite[Prop.\,2.2]{refined}).

\begin{prop} \label{lpXZ}
$\ell_p((1+X)^{[\omega]})\equiv
\omega\ell_p(1+X)\pmod{X^p}$.
\end{prop}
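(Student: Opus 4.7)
The plan is to lift to characteristic zero, where $(1+X)^\omega=\exp(\omega\log(1+X))$ holds in $\Q[\omega][[X]]$, and then descend modulo $X^p$ by checking $p$-integrality of coefficients.

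The main step is to establish the auxiliary congruence
\[
(1+X)^{[\omega]}\equiv e_p(\omega\ell_p(1+X))\pmod{X^p}.
\]
In $\Q[\omega][[X]]$, truncating the binomial series $(1+X)^\omega=\sum_{i\ge 0}\binom{\omega}{i}X^i$ modulo $X^p$ gives exactly $(1+X)^{[\omega]}$. Since $\log(1+X)$ has no constant term, $(\omega\log(1+X))^k\in X^k\Q[\omega][[X]]$, so only the terms with $k<p$ in $\exp(\omega\log(1+X))=\sum_{k\ge 0}(\omega\log(1+X))^k/k!$ survive mod $X^p$; in each such term $\log(1+X)$ may be replaced by its truncation $\ell_p(1+X)$ at the cost of a multiple of $X^p$ (via $a^k-b^k=(a-b)\sum_j a^{k-1-j}b^j$). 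Hence $\exp(\omega\log(1+X))\equiv e_p(\omega\ell_p(1+X))\pmod{X^p}$. Both sides of the desired congruence are polynomials in $X$ whose coefficients lie in $\Z_{(p)}[\omega]$---the only denominators are factorials $i!$ with $i<p$, all units in $\Z_{(p)}$---so the congruence descends to $\Z_{(p)}[\omega][X]/(X^p)$, and after reducing mod $p$ and specializing $\omega\in K$, to $K[X]/(X^p)$.

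Given the auxiliary congruence, applying $\ell_p$ to both sides finishes the argument. Since $\ell_p$ is a polynomial, $U\equiv V\pmod{X^p}$ in $1+XK[[X]]$ implies $\ell_p(U)\equiv\ell_p(V)\pmod{X^p}$, and so
\[
\ell_p((1+X)^{[\omega]})\equiv\ell_p(e_p(\omega\ell_p(1+X)))\pmod{X^p}.
\]
Setting $Z=\omega\ell_p(1+X)\in XK[[X]]$ we have $Z^p\in X^pK[[X]]$, so the identity $\ell_p(e_p(Z))\equiv Z\pmod{Z^p}$ recalled in the excerpt yields $\ell_p(e_p(Z))\equiv Z\pmod{X^p}$, which is the claim. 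The main obstacle is the $p$-integrality verification used to descend the auxiliary congruence from $\Q[\omega]$ to $\F_p[\omega]$; once that is in place, every other step is a formal manipulation with truncated power series.
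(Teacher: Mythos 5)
Your proof is correct and follows the same overall strategy as the paper's: establish the auxiliary congruence $(1+X)^{[\omega]}\equiv e_p(\omega\ell_p(1+X))\pmod{X^p}$ by lifting to characteristic zero, check $\Z_{(p)}$-integrality of the coefficients to descend to $\F$, and then apply $\ell_p$ together with $\ell_p(e_p(Z))\equiv Z\pmod{Z^p}$. The only difference is cosmetic: the paper reaches the auxiliary congruence by starting from the polynomial identity $e_p(X)^{[Z]}\equiv e_p(ZX)\pmod{X^p}$ in $\Q[X,Z]$ and substituting $X\mapsto\ell_p(1+X)$, whereas you derive it directly by truncating the power-series identity $(1+X)^{\omega}=\exp(\omega\log(1+X))$ in $\Q[\omega][[X]]$, which has the small advantage of proving rather than merely invoking the underlying identity.
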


\begin{proof} Since the congruence $e_p(X)^{[Z]}
\equiv e_p(Z X)\pmod{X^p}$ holds in $\Q[X,Z]$,
and involves polynomials with coefficients in
$\Z_{(p)}$, it is valid over $\F$.  By replacing $X$
with $\ell_p(1+X)$ and $Z$ with $\omega\in K$, we get
\begin{alignat*}{2}
(1+X)^{[\omega]}&\equiv e_p(\omega\ell_p(1+X))&&\pmod{X^p}.
\end{alignat*}
Applying $\ell_p$ to this congruence gives the
proposition.
\end{proof}

     Recall that $R=\F[G]$ and that $A$ is the augmentation
ideal of $R$.  

\begin{cor} \label{log}
Let $m\ge1$, let $\alpha,\beta\in1+A^m$, and let
$\omega\in\F$.  Then
\begin{align*}
\ell_p(\alpha^{[\omega]})&=\omega\ell_p(\alpha) \\
\ell_p(\alpha\beta)&\equiv
\ell_p(\alpha)+\ell_p(\beta)\pmod{A^{pm}}.
\end{align*}
\end{cor}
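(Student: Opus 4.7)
The plan is to deduce both assertions from the polynomial congruences of Proposition~\ref{lpXZ} and equation~\eqref{lpXY} by substituting $X=\alpha-1$ (and $Y=\beta-1$), combined with the crucial algebraic observation that $\eta^p=0$ for every $\eta\in A$. To verify this vanishing: the augmentation ideal $A$ is spanned over $\F$ by $\{g-1:g\in G\setminus\{1\}\}$, and for each such generator $(g-1)^p=g^p-1=0$ (using the characteristic-$p$ Frobenius identity together with $g^p=1$, since $G\cong C_p^n$). Because $R=\F[G]$ is commutative, the Frobenius map $x\mapsto x^p$ is additive, so for any $\eta=\sum_g c_g(g-1)\in A$ one obtains $\eta^p=\sum_g c_g^p(g-1)^p=0$.

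For the first equation, Proposition~\ref{lpXZ} says that $\ell_p((1+X)^{[\omega]})-\omega\,\ell_p(1+X)$ lies in the ideal $(X^p)\subset\F[X]$, so it equals $X^p\,h(X)$ for some $h\in\F[X]$. Setting $\eta:=\alpha-1\in A^m\subseteq A$ and substituting $X=\eta$ yields
\[
\ell_p(\alpha^{[\omega]})-\omega\,\ell_p(\alpha) = \eta^p\cdot h(\eta) = 0,
\]
since $\eta^p=0$ by the observation above. This delivers the first claim as an \emph{exact} identity in $R$, not merely a congruence.

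For the second equation, apply the polynomial congruence~\eqref{lpXY}: in $\F[X,Y]$, the difference $\ell_p((1+X)(1+Y))-\ell_p(1+X)-\ell_p(1+Y)$ lies in $(X,Y)^p$, so every monomial it involves has total degree at least $p$. Substituting $X=\alpha-1$ and $Y=\beta-1$ (both in $A^m$), each such monomial $(\alpha-1)^i(\beta-1)^j$ with $i+j\geq p$ lies in $A^{im}\cdot A^{jm}\subseteq A^{(i+j)m}\subseteq A^{pm}$, and hence so does the entire difference. The main subtlety---and the likely stumbling block---is recognizing that the first claim is genuine equality rather than only a congruence modulo $A^{pm}$; this strengthening is precisely what the identity $\eta^p=0$ for $\eta\in A$ provides, and without it the argument would give only the weaker congruence. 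Note that the same mechanism cannot upgrade the second claim to exact equality, because a mixed product $(\alpha-1)^i(\beta-1)^j$ with $0<i,j<p$ and $i+j=p$ need not vanish in $R$.
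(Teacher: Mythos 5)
Your proof is correct and follows essentially the same route as the paper: substitute $X=\alpha-1$, $Y=\beta-1$ into Proposition~\ref{lpXZ} and congruence~\eqref{lpXY}, with the first statement upgraded to an exact equality because $(\alpha-1)^p=0$. Your explicit verification that $\eta^p=0$ for all $\eta\in A$ (via Frobenius additivity and $(g-1)^p=g^p-1=0$) merely spells out the fact the paper asserts without proof.
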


\begin{proof} This follows from Proposition~\ref{lpXZ}
and congruence (\ref{lpXY}) by setting $X=\alpha-1$ and
$Y=\beta-1$.  The first formula is an equality rather
than a congruence because $(\alpha-1)^p=0$.
\end{proof}

     Fix $k$ such that $2\le k\le p$ and set
$\Rbar=R/A^k$.  For $\gamma\in R$ let
$\gammabar=\gamma+A^k$ be the image of $\gamma$ in
$\Rbar$.
Note that $1+A$
and $1+A^k$ are subgroups of $R^{\times}$, and that
$(1+A)/(1+A^k)$ is isomorphic to the image of $1+A$ in
$\Rbar^{\times}$.
For $\gamma\in1+A$ and $\omega\in\F$ let
$\gammabar^{[\omega]}=\gamma^{[\omega]}+A^k$ denote the
image of $\gamma^{[\omega]}$ in $\Rbar$.  
The function $\Lambda_p:1+A\ra A$
defined by $\Lambda_p(\alpha)=\ell_p(\alpha)$ induces a
bijection
\[\Lambdabar_p:(1+A)/(1+A^k)\lra A/A^k.\]
By Corollary~\ref{log} this map is a group isomorphism.
Furthermore, defining scalar multiplication by
$\omega\cdot\alphabar=\alphabar^{[\omega]}$ makes 
$(1+A)/(1+A^k)$ a vector space over
$\F$, and $\Lambdabar_p$ an isomorphism of
$\F$-vector spaces.  Let $\Gbar$ denote the image
of $G$ in $(1+A)/(1+A^k)$, and let
$\Gbar^{[\F]}$ be the $\F$-span of $\Gbar$.

     Let $\M_L^{d_{L/K}}$ be the different of the
extension $L/K$.  We say that $\rho\in L^{\times}$ is a
valuation criterion element for $L/K$ if
$v_L(\rho)\equiv-d_{L/K}-1\pmod{p^n}$.  In \cite{valp}
it is shown that every valuation criterion element
generates a normal basis for $L/K$.  Since the only
ramification break of $L/K$ is $b$ we have
$d_{L/K}=(p^n-1)(b+1)$.  Therefore the valuation
criterion for $\rho\in L$ is
$v_L(\rho)\equiv b\pmod{p^n}$, in agreement with
\cite{proc-lms}.  Let $\rho$ be a
valuation criterion element for $L/K$.  For
$\gammabar\in\Gbar^{[\F]}$ we define
\[i_{\rho}(\gammabar)=\max\{v_L((\gamma'-1)({\rho})):
\gamma'\in\gammabar\}.\]

\begin{definition} \label{VC}
The set of refined ramification breaks of $L/K$ with
respect to $\rho$ and $\Rbar=R/A^k$ is defined to be
\[B_{\rho,k}=\{i_{\rho}(\gammabar)-v_L(\rho):
\gammabar\in\Gbar^{[\F]},\;\gammabar\not=\onebar\}.\]
We say that the elements of $B_{\rho,k}$ are the
$(\rho,k)$-refined breaks of $L/K$.  If the set
$B_{\rho,k}$ is the same for all $\rho$ such that
$v_L(\rho)\equiv b\pmod{p^n}$ we define the set of
valuation criterion refined breaks of $L/K$ (with
respect to $\Rbar=R/A^k$) to be $B_{\rho,k}$ for any 
valuation criterion element $\rho$.  In this
case we say that the elements of $B_{\rho,k}$ are the
VC$_k$-refined breaks of $L/K$.
\end{definition}

 The argument used to prove Theorem~3.3 of \cite{new} shows that
$B_{\rho,k}$ consists of $n$ distinct elements.  It
follows from the proof of \cite[Lemma~3]{necessity} that when $n=2$,
the VC$_k$-refined breaks of $L/K$ are defined for
$2\le k\le p$.

     We wish to give an alternative definition for
refined ramification breaks of $L/K$ which takes into
account the effect of $\gamma\in R$ on the valuations of
all the elements of $L$.  Motivated by the definition of
the norm of a linear operator, and also by the
definitions of $\mathfrak{C}_i$ and $\mathfrak{A}_i$ in
\cite[p.\,36]{bon2}, we set
\[\hat{v}_L(\gamma)=\min\{v_L(\gamma(x))-v_L(x):
x\in L^{\times}\}.\]
Then $\hat{v}_L(\gamma)=\infty$ if and only if
$\gamma=0$.  Furthermore, for $\gamma,\delta\in R$ we
have
\begin{align*}
\hat{v}_L(\gamma\delta)
&\ge\hat{v}_L(\gamma)+\hat{v}_L(\delta) \\
\hat{v}_L(\gamma+\delta)
&\ge\min\{\hat{v}_L(\gamma),\hat{v}_L(\delta)\}.
\end{align*}
Therefore $\hat{v}_L$ is a pseudo-valuation on $R$ (see
\cite[p.\,108]{rees}).

\begin{lemma} \label{all}
Let $\gamma\in R$ and let $x\in L^{\times}$ satisfy
$\hat{v}_L(\gamma)=v_L(\gamma(x))-v_L(x)$.  Then for
every $y\in L^{\times}$ such that $v_L(y)\equiv v_L(x)
\pmod{p^n}$ we have $\hat{v}_L(\gamma)
=v_L(\gamma(y))-v_L(y)$.
\end{lemma}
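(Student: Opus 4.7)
The plan is to reduce to the case $v_L(y)=v_L(x)$ by scaling with an element of $K^\times$, and then exploit the fact that the residue field of $L$ equals $\F$, which sits inside the $G$-fixed field $K$, so that $y$ and $x$ differ by a scalar in $\F^\times$ up to a term of strictly larger valuation that is dominated after applying $\gamma$.

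First I would carry out the scaling reduction. Since $L/K$ is totally ramified of degree $p^n$, any uniformizer $\pi_K\in K$ satisfies $v_L(\pi_K)=p^n$. Given $y\in L^\times$ with $v_L(y)\equiv v_L(x)\pmod{p^n}$, write $v_L(y)-v_L(x)=mp^n$ and replace $y$ by $y'=\pi_K^{-m}y$, which has $v_L(y')=v_L(x)$. Because $\pi_K\in K$ is fixed by every $g\in G$, we have $\gamma(y')=\pi_K^{-m}\gamma(y)$, and hence $v_L(\gamma(y'))-v_L(y')=v_L(\gamma(y))-v_L(y)$. This reduces the lemma to the case $v_L(y)=v_L(x)$.

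With $v_L(y)=v_L(x)$, set $u=y/x\in\OO_L^\times$. Since $L/K$ is totally ramified, the residue field of $L$ is again $\F$, so the image of $u$ in $\OO_L/\M_L$ is some $c_0\in\F^\times$, and we may write $u=c_0+w$ with $w\in\M_L$. Then $y=c_0 x+wx$ with $v_L(wx)\ge v_L(x)+1$. Using additivity of $\gamma$ together with the fact that $c_0\in\F\subset K$ is fixed by every $g\in G$, we obtain $\gamma(y)=c_0\gamma(x)+\gamma(wx)$. By hypothesis, $v_L(c_0\gamma(x))=v_L(\gamma(x))=v_L(x)+\hat{v}_L(\gamma)$, while by the definition of $\hat{v}_L$,
\[
v_L(\gamma(wx))\ge v_L(wx)+\hat{v}_L(\gamma)\ge v_L(x)+1+\hat{v}_L(\gamma).
\]
The first valuation is strictly smaller, so $v_L(\gamma(y))=v_L(x)+\hat{v}_L(\gamma)=v_L(y)+\hat{v}_L(\gamma)$, as desired; the case $\gamma=0$ is trivial.

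I do not anticipate a real obstacle. The whole argument rests on the single observation that the residue of $u=y/x$ lies in $\F\subset K^G$, so the leading term $c_0x$ interacts with $\gamma$ by pure $\F$-linearity, while the remainder $wx$ is pushed strictly deeper in the $\M_L$-adic filtration and is dominated after applying $\gamma$. Notably, the hypothesis of a single ramification break is not actually used for this step; only that $L/K$ is totally ramified enters.
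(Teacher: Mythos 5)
Your proof is correct and follows essentially the same route as the paper's: you write $y=cx+z$ with $c\in K^\times$ and $v_L(z)>v_L(y)$, use $K$-linearity of $\gamma$ to split $\gamma(y)=c\gamma(x)+\gamma(z)$, and observe that the second term has strictly larger valuation than the first. The paper packages your two reductions (the $\pi_K$-scaling and the residue-field step) into the single statement that such a $c\in K$ exists, but the underlying decomposition and the ultrametric estimate are identical.
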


\begin{proof}
The assumption on $y$ implies that there is $c\in K$
such that $v_L(cx)=v_L(y)$ and $v_L(y-cx)>v_L(y)$.  Set
$z=y-cx$.  Then
\[v_L(\gamma(cx))-v_L(cx)=v_L(\gamma(x))-v_L(x)
=\hat{v}_L(\gamma),\]
so we have
\[v_L(\gamma(z))\ge\hat{v}_L(\gamma)+v_L(z) 
>\hat{v}_L(\gamma)+v_L(cx)=v_L(\gamma(cx)).\]
It follows that
\begin{align*}
v_L(\gamma(y))-v_L(y)
&=v_L(\gamma(cx)+\gamma(z))-v_L(cx+z) \\
&=v_L(\gamma(cx))-v_L(cx) \\
&=\hat{v}_L(\gamma). \qedhere
\end{align*}
\end{proof}

     For $\gammabar\in\Rbar$ define
\[\hat{v}_L(\gammabar)
=\max\{\hat{v}_L(\gamma'):\gamma'\in\gammabar\}.\]
Suppose $\gamma'\in\gammabar$, $\delta'\in\deltabar$
satisfy $\hat{v}_L(\gamma')=\hat{v}_L(\gammabar)$,
$\hat{v}_L(\delta')=\hat{v}_L(\deltabar)$.  Then
\begin{align*}
\hat{v}_L(\gammabar+\deltabar)
&\ge\hat{v}_L(\gamma'+\delta')
\ge\min\{\hat{v}_L(\gamma'),\hat{v}_L(\delta')\}
=\min\{\hat{v}_L(\gammabar),\hat{v}_L(\deltabar)\} \\
\hat{v}_L(\gammabar\deltabar)
&\ge\hat{v}_L(\gamma'\delta')
\ge\hat{v}_L(\gamma') +\hat{v}_L(\delta')
=\hat{v}_L(\gammabar)+\hat{v}_L(\deltabar).
\end{align*}
Therefore $\hat{v}_L$ is a pseudo-valuation on $\Rbar$.  
For $h\ge0$ define
\[\Jbar_h=\{\gammabar\in\Rbar:\hat{v}_L(\gammabar)
\ge h\}.\]
Since $\hat{v}_L$ is a pseudo-valuation on $\Rbar$ we
see that $\Jbar_h$ is an ideal in $\Rbar$.  We clearly
have $\Jbar_0=\Rbar$, $\Jbar_{h+1}\subset\Jbar_h$ for
$h\ge0$, and $\Jbar_h=\{\overline{0}\}$ for sufficiently
large $h$.  For $h\ge0$ let
\begin{equation} \label{filter}
\Gbar^{[\F]}_h=\{\gammabar\in\Gbar^{[\F]}:
\gammabar-\overline{1}\in\Jbar_h\}.
\end{equation}

\begin{definition} \label{SS}
Say $h\in\N\cup\{0\}$ is a smallest-shift ramification break of
$L/K$ (with respect to $\Rbar=R/A^k$) if
$\Gbar^{[\F]}_{h+1}\not=\Gbar^{[\F]}_h$.  In this case
we say that $h$ is an SS$_k$-refined break of $L/K$.
\end{definition}

\begin{remark} \label{semistable}
Let $\rho$ be a valuation criterion element for $L/K$.
If the extension $L/K$ is ``semistable'' in the sense of
Definition~3.1.1 of \cite{bon2} then by Theorem~4.4 of
the same paper we have
$\hat{v}_L(\gamma)=v_L(\gamma(\rho))-v_L(\rho)$ for
every $\gamma\in R$.  Hence if $L/K$ is a semistable
extension then the $(\rho,k)$-refined breaks of $L/K$ are
equal to the SS$_k$-refined breaks for $2\le k\le p$.
In particular, the sets $B_{\rho,k}$ are independent of
$\rho$, so the VC$_k$-refined breaks of $L/K$ are
defined in this case.
\end{remark}

     Let $\gamma\in1+A$.  Then
$\ell_p(\gamma)=\mu\cdot(\gamma-1)$ for some
$\mu\in1+A$.  Hence for $x\in L$ we have
$v_L((\gamma-1)(x))=v_L(\ell_p(\gamma)(x))$.  It follows
that $\hat{v}_L(\gamma-1)=\hat{v}_L(\ell_p(\gamma))$,
and hence that $\hat{v}_L(\overline{\gamma-1})
=\hat{v}_L(\Lambdabar_p(\gammabar))$.  Therefore
\[\Gbar^{[\F]}_h=\{\gammabar\in\Gbar^{[\F]}:
\Lambdabar_p(\gammabar)\in\Jbar_h\}\]
is an $\F$-subspace of $(1+A)/(1+A^k)$
for all $h\ge0$.  It follows that the set of
SS$_k$-refined ramification breaks of $L/K$ is
\begin{equation} \label{set}
E_k=\{\hat{v}_L(\deltabar):\deltabar\in
\Span_{\F}(\Lambdabar_p(\Gbar)),
\;\deltabar\not=\overline{1}\}.
\end{equation}
We define the multiplicity of an SS$_k$-refined break
$h$ to be the $\F$-dimension of
$\Gbar^{[\F]}_h/\Gbar^{[\F]}_{h+1}$.  Since
$\Gbar^{[\F]}=\Gbar_0^{[\F]}$ has dimension $n$
over $\F$, the sum of the multiplicities of the
SS$_k$-refined breaks of $L/K$ is equal to $n$.

\begin{remark}
It follows from the above that $|E_k|\le n$, but it's not
obvious why $|E_k|=n$ should hold.  On the other hand, we
saw that if the VC$_k$-refined ramification breaks of
$L/K$ are defined then there are $n$ distinct
VC$_k$-refined breaks.
\end{remark}

\begin{remark}
Suppose $k=2$.  It follows from Corollary~\ref{log} that
the map
\[\Lambdabar_p:(1+A)/(1+A^2)\lra A/A^2\]
induced by $\Lambda_p:1+A\ra A$ is an isomorphism of
vector spaces over $\F$.  Hence $\Lambdabar_p(\Gbar)$
spans the $n$-dimensional $\F$-vector space $A/A^2$.  It
follows that the set of SS$_2$-refined breaks of $L/K$
is $\{\hat{v}_L(\deltabar):\deltabar\in A/A^2,\;
\deltabar\not=\overline{0}\}$.  Therefore the
SS$_2$-refined breaks of $L/K$ can be defined without
recourse to truncated powers or the truncated logarithm.
\end{remark}

\begin{remark} \label{ineq}
Let $2\le\ell\le k$ and let $\gammabar=\gamma+A^k\in
\Gbar^{[\F]}$.  Then
$(\gamma-1)+A^k\subset(\gamma-1)+A^{\ell}$, so we
have $\hat{v}_L((\gamma-1)+A^k)\le
\hat{v}_L((\gamma-1)+A^{\ell})$.  Therefore if we
arrange the SS$_k$-refined breaks and the
SS$_{\ell}$-refined breaks (counted with multiplicities)
in nondecreasing order then the SS$_k$-refined breaks
are less than or equal to the corresponding
SS$_{\ell}$-refined breaks.  A similar argument shows
that if $\rho\in L$ satisfies
$v_L(\rho)\equiv b\pmod{p^n}$ then the $(\rho,k)$-refined
breaks are less than or equal to the
$(\rho,\ell)$-refined breaks.  It follows that if the
VC$_k$-refined breaks and the VC$_{\ell}$-refined breaks
are defined then the VC$_k$-refined breaks are less
than or equal to the VC$_{\ell}$-refined breaks.
Finally, it follows from Definitions \ref{VC} and
\ref{SS} that the SS$_k$-refined breaks are less than or
equal to the $(\rho,k)$-refined breaks and the
VC$_k$-refined breaks.
\end{remark}

     We wish to give upper bounds for the refined breaks
of $L/K$.  We need the following well-known fact (see
for instance \cite[III, Prop.\,1.4]{cl}).

\begin{lemma} \label{Tr}
Let $L/K$ be a finite separable totally ramified
extension of local fields and let $\M_L^{d_{L/K}}$ be the
different of $L/K$.  Let $r\in\Z$.  Then
$\Tr_{L/K}(\M_L^r)=\M_K^s$, where
$s=\dst\left\lfloor\frac{r+d_{L/K}}{[L:K]}\right\rfloor$.
\end{lemma}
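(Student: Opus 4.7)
The plan is to identify $\Tr_{L/K}(\M_L^r)$ as a nonzero fractional ideal of $K$, necessarily of the form $\M_K^{s_0}$, and then to pin down $s_0$ by comparing with the codifferent. First I would verify that $\Tr_{L/K}(\M_L^r)$ is an $\OO_K$-submodule of $K$ (by $\OO_K$-linearity of $\Tr_{L/K}$), that it is finitely generated over $\OO_K$ (since $\M_L^r$ is, because $\OO_L$ is a finitely generated $\OO_K$-module), and that it is nonzero (since $L/K$ is separable, so $\Tr_{L/K}$ is not identically zero on any nonzero $K$-subspace). Hence $\Tr_{L/K}(\M_L^r) = \M_K^{s_0}$ for a unique $s_0 \in \Z$, and the task reduces to determining the largest integer $s$ for which $\Tr_{L/K}(\M_L^r) \subseteq \M_K^s$.

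To do this, I would combine total ramification (so $v_L|_K = d \cdot v_K$, giving $\M_K^s \OO_L = \M_L^{ds}$) with $\OO_K$-linearity of the trace to produce the chain
\[
\Tr_{L/K}(\M_L^r) \subseteq \M_K^s
\;\Longleftrightarrow\; \Tr_{L/K}(\M_L^{r-ds}) \subseteq \OO_K
\;\Longleftrightarrow\; \M_L^{r-ds} \subseteq \M_L^{-d_{L/K}}.
\]
The final equivalence invokes the standard codifferent description of the different, namely $\{y \in L : \Tr_{L/K}(y\OO_L) \subseteq \OO_K\} = \M_L^{-d_{L/K}}$, applied to the $\OO_L$-module $\M_L^{r-ds}$. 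Rearranging the last inclusion gives $s \leq (r + d_{L/K})/d$, so the largest such integer is $s_0 = \lfloor (r + d_{L/K})/d \rfloor$, as claimed.

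The one step that deserves attention is the upgrade \emph{$\Tr_{L/K}(\M_L^{r-ds}) \subseteq \OO_K$ implies $\M_L^{r-ds} \subseteq \M_L^{-d_{L/K}}$}: given $x \in \M_L^{r-ds}$, one uses $\OO_L$-stability of the ideal to see that $x\OO_L \subseteq \M_L^{r-ds}$, whence $\Tr_{L/K}(x\OO_L) \subseteq \OO_K$, which is precisely the statement that $x$ lies in the codifferent. This is really the whole content of the lemma; everything else is bookkeeping in valuations. Since the statement is standard and a reference is already given, I do not anticipate any serious obstacle, and in the paper itself one can simply defer to \cite[III, Prop.\,1.4]{cl} in place of writing out these verifications.
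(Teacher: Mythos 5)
The paper does not supply its own proof of this lemma; it simply defers to \cite[III, Prop.\,1.4]{cl}. Your argument is correct and is precisely the standard codifferent computation underlying that reference---rewriting $\Tr_{L/K}(\M_L^r)\subseteq\M_K^s$, via total ramification, as the inclusion $\M_L^{r-ds}\subseteq\M_L^{-d_{L/K}}$ and then taking the largest admissible integer $s$---so it matches the approach of the cited source.
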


\begin{prop} \label{valTr}
Let $L/K$ be a finite separable totally ramified
extension of local fields and let $M/K$ be a
subextension of $L/K$.  Let $\M_L^{d_{L/K}}$ be the
different of $L/K$, let $\M_L^{d_{L/M}}$ be the
different of $L/M$, and let $\M_M^{d_{M/K}}$ be the
different of $M/K$.  Let $\rho\in L$ satisfy
$v_L(\rho)=-d_{L/K}-1$.  Then
\[v_M(\Tr_{L/M}(\rho))=\frac{d_{L/M}-d_{L/K}}{[L:M]}-1
=-d_{M/K}-1.\]
\end{prop}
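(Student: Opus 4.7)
The plan is to combine Lemma~\ref{Tr} with the tower formula for the different. First I would invoke the multiplicativity of the different in the tower $K\subset M\subset L$, which, since $L/K$ is totally ramified and hence $e_{L/M}=[L:M]$, yields
\[d_{L/K}=d_{L/M}+[L:M]\cdot d_{M/K}.\]
Rearranging gives $(d_{L/M}-d_{L/K})/[L:M]-1=-d_{M/K}-1$, which is the second equality of the proposition.

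For the first equality, I would apply Lemma~\ref{Tr} to the extension $L/M$ twice: once with $r=-d_{L/K}-1$ and once with $r=-d_{L/K}$. Substituting the tower identity shows $(r+d_{L/M})/[L:M]=-d_{M/K}-1/[L:M]$ in the first case and $-d_{M/K}$ in the second. The first value is non-integral (assuming $[L:M]>1$; the case $[L:M]=1$ is trivial), with floor $-d_{M/K}-1$. Hence
\[\Tr_{L/M}\bigl(\M_L^{-d_{L/K}-1}\bigr)=\M_M^{-d_{M/K}-1},\qquad
\Tr_{L/M}\bigl(\M_L^{-d_{L/K}}\bigr)=\M_M^{-d_{M/K}}.\]
Since $\rho\in\M_L^{-d_{L/K}-1}$, the first identity immediately forces $v_M(\Tr_{L/M}(\rho))\ge-d_{M/K}-1$.

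The only point of any subtlety is to upgrade this inequality to equality, and for that I would pass to associated graded quotients. The two displayed identities together say that the induced $\F$-linear map
\[\overline{\Tr}_{L/M}\colon\M_L^{-d_{L/K}-1}/\M_L^{-d_{L/K}}\lra
\M_M^{-d_{M/K}-1}/\M_M^{-d_{M/K}}\]
is surjective, where $\F$ is the common residue field of $K$, $M$, and $L$ (the tower being totally ramified). Source and target are each one-dimensional over $\F$, so surjectivity upgrades to bijectivity. The hypothesis $v_L(\rho)=-d_{L/K}-1$ says the class of $\rho$ is nonzero in the source, hence its image is nonzero in the target, i.e.\ $\Tr_{L/M}(\rho)\notin\M_M^{-d_{M/K}}$. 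This gives the required equality. No step looks obstacle-sized; the dimension count in the final step is the piece of information that Lemma~\ref{Tr} on its own does not supply, but it follows painlessly once one has both ideal equalities side by side.
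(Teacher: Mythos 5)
Your proposal is correct and follows essentially the same route as the paper: apply Lemma~\ref{Tr} at the two consecutive exponents $-d_{L/K}-1$ and $-d_{L/K}$, use the tower formula $d_{L/K}=d_{L/M}+[L:M]\,d_{M/K}$ to evaluate the floors, and then observe that the induced surjection between the one-dimensional residue quotients $\M_L^{-d_{L/K}-1}/\M_L^{-d_{L/K}}\to\M_M^{-d_{M/K}-1}/\M_M^{-d_{M/K}}$ must be an isomorphism, so the nonzero class of $\rho$ maps to a nonzero class. The paper phrases the last step in terms of generators of $\OO_M$-modules rather than an explicit dimension count over $\F$, but the content is identical.
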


\begin{proof}
Set $m=[L:M]$.  By Lemma~\ref{Tr} we have
$\Tr_{L/M}(\M_L^{-d_{L/K}-1})=\M_M^s$ and
$\Tr_{L/M}(\M_L^{-d_{L/K}})=\M_M^{s'}$ with
\begin{align*}
s&=\dst\left\lfloor\frac{d_{L/M}-d_{L/K}-1}{m}
\right\rfloor \\
s'&=\dst\left\lfloor\frac{d_{L/M}-d_{L/K}}{m}
\right\rfloor.
\end{align*}
Since $\M_L^{d_{L/K}}=\M_L^{d_{L/M}}\cdot\M_M^{d_{M/K}}$
we get $d_{L/K}-d_{L/M}=md_{M/K}$.  It follows that
$s=-d_{M/K}-1$ and $s'=-d_{M/K}$.  Hence $\Tr_{L/M}$
induces an isomorphism of $\OO_M$-modules
\[\M_L^{-d_{L/K}-1}/\M_L^{-d_{L/K}}\cong
\M_M^{-d_{M/K}-1}/\M_M^{-d_{M/K}}.\]
Since $\rho+\M_L^{-d_{L/K}}$ generates
$\M_L^{-d_{L/K}-1}/\M_L^{-d_{L/K}}$ as an
$\OO_M$-module, it follows that
$\Tr_{L/M}(\rho)+\M_M^{-d_{M/K}}$ generates
$\M_M^{-d_{M/K}-1}/\M_M^{-d_{M/K}}$ as an
$\OO_M$-module.  We conclude that
$v_M(\Tr_{L/M}(\rho))=-d_{M/K}-1$.
\end{proof}

\begin{prop} \label{bound}
Let $K$ be a local field of characteristic $p$ and let
$L/K$ be a totally ramified $C_p^n$-extension with a
single ramification break $b$.  Let $\rho\in L$ satisfy
$v_L(\rho)\equiv b\pmod{p^n}$, let $2\le k\le p$,
and let $b_0<b_1<\ldots<b_{n-1}$ be the $(\rho,k)$-refined
breaks of $L/K$.  Then for $0\le i<n$ we have
$b_i\le bp^i$.
\end{prop}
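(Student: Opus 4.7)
Define $V_h := \{\gammabar \in \Gbar^{[\F]} : i_\rho(\gammabar) \ge v_L(\rho) + h\}$ for $h \ge 0$. The plan is first to verify that $V_h$ is an $\F$-subspace of $\Gbar^{[\F]}$: for $\gammabar, \deltabar \in V_h$ with representatives $\gamma', \delta'$ achieving the maximum, the expansion $(\gamma' \delta' - 1)(\rho) = (\gamma' - 1)(\rho) + (\delta' - 1)(\rho) + (\gamma' - 1)(\delta' - 1)(\rho)$ makes each summand have valuation at least $v_L(\rho) + h$ (the cross term uses that every element of $A$ shifts by at least $b$), so $\gammabar \deltabar \in V_h$; closure under truncated exponentiation is analogous. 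Since the $n$ refined breaks are distinct (by the argument of Theorem~3.3 in \cite{new}), the bound $b_i \le bp^i$ is equivalent to $\dim_{\F} V_{bp^i + 1} \le n - i - 1$ for each $0 \le i < n$.

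To bound $\dim V_{bp^i + 1}$, I would expand $(\gamma - 1)(\rho)$ in the $\M_L$-adic filtration using the Artin-Schreier data $(\beta, \vomega, \vepsilon)$ for $L/K$. Writing $\gammabar = \sigmabar_1^{[\omega_1]} \cdots \sigmabar_n^{[\omega_n]}$, one expects the leading contribution in $\M_L^{v_L(\rho) + b}/\M_L^{v_L(\rho) + b + 1}$ to be an $\F$-linear form in $(\omega_1, \ldots, \omega_n)$, whose vanishing cuts $\Gbar^{[\F]}$ down to the $(n-1)$-dimensional subspace $V_{b+1}$. Within this subspace, the next nonvanishing term should arrive only at $\M_L^{v_L(\rho) + bp}$, the characteristic-$p$ Artin-Schreier structure causing Frobenius to act on the residual coefficients; this produces a $p$-linearized form in $(\omega_1, \ldots, \omega_n)$, again codimension $1$. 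Iterating, at step $i$ one expects a $p^i$-linearized form appearing at $\M_L^{v_L(\rho) + bp^i}$, contributing another codimension-$1$ cut, so that $\dim V_{bp^i + 1} \le n - i - 1$.

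The hard part will be making precise this Frobenius structure: the factor of $p$ in the bound $bp^i$ (as opposed to, say, $kb$ or $(p-1)b$) is ultimately a reflection of the $p$-th power map in characteristic $p$, visible in the Artin-Schreier relations $y_j^p - y_j = \alpha_j = \omega_j^{p^n}\beta + \epsilon_j$. Carrying the expansion out cleanly requires writing $\rho$ in terms of the $y_j$'s and tracking how Frobenius on the residue field $\F$ propagates through the successive orders of $(\gamma - 1)(\rho)$. An alternative, less constructive route uses the trace $\Tr_{L/M}$ for a subfield $M \subset L$ with $[L:M] = p^{n - i - 1}$ together with Proposition~\ref{valTr}: if the shift of $(\gamma' - 1)(\rho)$ exceeds $bp^i$, one would argue that the image of $\gamma - 1$ in $\F[\Gal(M/K)]$ must lie in $A_{G/H}^k$ (equivalently, $\gamma - 1 \in \ker(R \to \F[\Gal(M/K)]) + A^k$), whose preimage in $\Gbar^{[\F]}$ has the desired codimension $i + 1$. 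The delicate step in either approach is showing that no accidental cancellations improve the shift beyond $bp^i$ until the appropriate Frobenius condition is satisfied.
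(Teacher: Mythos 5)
Your subspace $V_h$ is essentially the $(\rho,k)$-analogue of the filtration $\Gbar^{[\F]}_h$ in equation (\ref{filter}), and your verification that it is $\F$-linear (via $\gamma'\delta'-1 = (\gamma'-1)+(\delta'-1)+(\gamma'-1)(\delta'-1)$ together with $\hat{v}_L(\gamma'-1)\ge b$) is sound. But this only reformulates what must be proven; the proposal does not actually prove the dimension bound $\dim_{\F} V_{bp^i+1}\le n-i-1$, and both routes you sketch for it have problems.

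Route (a) — extracting a chain of $p$-linearized forms from an explicit Artin--Schreier expansion of $(\gamma-1)(\rho)$ — is not how the paper argues here (such computations appear only later, for Theorem~\ref{main}, and under extra hypotheses on the $\epsilon_i$). You flag this yourself as ``the hard part'' and leave it unverified; as written it is a heuristic, not a proof, and the claim that no ``accidental cancellations'' improve the shift is exactly what would need careful justification. Route (b) is closer to the paper's actual argument, but the details are off in ways that matter. The paper first reduces to $k=2$ via Remark~\ref{ineq}, picks $\Psi_1,\ldots,\Psi_n\in A$ achieving the $n$ distinct breaks, and assumes for contradiction that $b_j>bp^j$. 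It then applies a Steinitz exchange to find $\tau_1,\ldots,\tau_j\in G$ so that $\Psi_1,\ldots,\Psi_{n-j},\tau_1-1,\ldots,\tau_j-1$ is a basis of $A/A^2$; setting $H=\langle\tau_1,\ldots,\tau_j\rangle$ and $M=L^H$, one has $[L:M]=p^j$ (not $p^{n-i-1}$ as you write). The contradiction is not that $\gamma-1$ maps into $\A_H^k$; rather, one uses Lemma~\ref{Tr} and Proposition~\ref{valTr} to show that each $\Psi_i$ ($1\le i\le n-j$) shifts $\Tr_{L/M}(\rho)$ by at least $b+1$, and since these $\Psi_i$ span $\A_H/\A_H^2$ and $v_M(\Tr_{L/M}(\rho))$ is prime to $p$, this forces all lower ramification breaks of $M/K$ to be $\ge b+1$, contradicting that the unique break of $M/K$ is $b$. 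That Steinitz exchange (to get the right quotient $G/H$) and the valuation-of-trace estimate are the load-bearing steps, and both are absent from the proposal.
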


\begin{proof}
By Remark~\ref{ineq} it suffices to prove the proposition in the case
$k=2$.  For $1\le i\le n$ let $\Psi_i\in A$ be such that
$v_L(\Psi_i(\rho))-v_L(\rho)=b_{n-i}$.  Since $b_0,\ldots,b_{n-1}$ are
distinct the images of $\Psi_1,\ldots,\Psi_n$ in $A/A^2$ form an
$\F$-basis for $A/A^2$.  Suppose $b_j>bp^j$.  By the Steinitz
Exchange Lemma
there are $\tau_1,\ldots,\tau_j\in G$ such that
the images in $A/A^2$ of $\Psi_1,\ldots,\Psi_{n-j},
\tau_1-1,\ldots,\tau_j-1$ form a basis for $A/A^2$.
Let $H\cong C_p^j$ be the subgroup of $G$ generated by
$\tau_1,\ldots,\tau_j$ and let $M=L^H$ be the fixed
field of $H$.  Let $\A_H$ denote the augmentation
ideal of $\F[G/H]$, and observe that the images of
$\Psi_1,\ldots,\Psi_{n-j}$ in $\A_H/\A_H^2$ form a basis for
$\A_H/\A_H^2$.

     For $c\in K^{\times}$ and $\Upsilon\in A$ we have
\[v_L(\Upsilon(c\rho))-v_L(c\rho)
=v_L(\Upsilon(\rho))-v_L(\rho).\]
Hence the $(c\rho,k)$-refined breaks of $L/K$ are the
same as the $(\rho,k)$-refined breaks.  Therefore we may
assume that $v_L(\rho)=b-(b+1)p^n=-d_{L/K}-1$.  Then for
$1\le i\le n-j$ we have
\[v_L(\Psi_i(\rho))\ge bp^j+1+v_L(\rho)=bp^j-d_{L/K}.\]
Hence by Proposition~\ref{valTr} and Lemma~\ref{Tr} we
get
\begin{align*}
v_M(\Psi_i(\Tr_{L/M}(\rho)))-v_M(\Tr_{L/M}(\rho))
&=v_M(\Tr_{L/M}(\Psi_i(\rho)))-
\left(\frac{d_{L/M}-d_{L/K}}{p^j}-1\right) \\
&\ge\left\lfloor\frac{bp^j-d_{L/K}+d_{L/M}}{p^j}
\right\rfloor-\frac{d_{L/M}-d_{L/K}}{p^j}+1 \\
&=b+1.
\end{align*}
Since the images of $\Psi_1,\ldots,\Psi_{n-j}$ span
$\A_H/\A_H^2$ over $\F$, and
\[v_M(\Tr_{L/M}(\rho))=-(b+1)(p^{n-j}-1)-1\]
is not divisible by $p$, it follows that the lower
ramification breaks of $M/K$ are all $\ge b+1$.  Since
the only lower ramification break of $M/K$ is $b$, this
is a contradiction.  Hence we must have $b_i\le bp^i$
for $0\le i\le n-1$.
\end{proof}

\begin{cor} \label{min}
Let $K$ be a local field of characteristic $p$ and let
$L/K$ be a totally ramified $C_p^n$-extension with a
single ramification break $b$.  Then for $2\le k\le p$
the SS$_k$-refined breaks
$b_0\le b_1\le\cdots\le b_{n-1}$ of $L/K$ satisfy
$b_i\le bp^i$ for $0\le i\le n-1$.  If the
VC$_k$-refined breaks $b_0'<b_1'<\cdots<b_{n-1}'$ of
$L/K$ are defined they satisfy $b_i'\le bp^i$ for
$0\le i\le n-1$.
\end{cor}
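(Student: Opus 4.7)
The plan is to deduce Corollary~\ref{min} directly from Proposition~\ref{bound} together with Remark~\ref{ineq}, so essentially no new work is needed beyond invoking these two results carefully.

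For the VC$_k$ statement, the argument is immediate: if the VC$_k$-refined breaks are defined, then by Definition~\ref{VC} the set $B_{\rho,k}$ is independent of the choice of valuation criterion element $\rho$, so pick any $\rho\in L$ with $v_L(\rho)\equiv b\pmod{p^n}$ and write $B_{\rho,k}=\{b_0'<b_1'<\cdots<b_{n-1}'\}$. Then Proposition~\ref{bound} applied to this $\rho$ gives $b_i'\le bp^i$ for $0\le i\le n-1$, which is the desired conclusion.

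For the SS$_k$ statement, fix a valuation criterion element $\rho$, and let $b_0'<\cdots<b_{n-1}'$ be the $(\rho,k)$-refined breaks. By Proposition~\ref{bound} we already know $b_i'\le bp^i$ for $0\le i\le n-1$. Now the SS$_k$-refined breaks, listed with multiplicities in nondecreasing order as $b_0\le b_1\le\cdots\le b_{n-1}$, total $n$ values (by the multiplicity discussion following~(\ref{set})). The last sentence of Remark~\ref{ineq} asserts that, term by term in this nondecreasing ordering, the SS$_k$-refined breaks are bounded above by the $(\rho,k)$-refined breaks, so $b_i\le b_i'$. Combining with the proposition yields $b_i\le b_i'\le bp^i$, as required.

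The only thing to be careful about is the counting convention: the SS$_k$-refined breaks may have repetitions (with at most $n$ in total once multiplicities are counted), while the $(\rho,k)$-refined breaks are $n$ distinct values, and the term-by-term comparison in Remark~\ref{ineq} is what makes the two lists of equal length~$n$ line up correctly. No further obstacle arises.
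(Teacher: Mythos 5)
Your proposal is correct and takes essentially the same route as the paper, whose proof of Corollary~\ref{min} is the single sentence ``This follows from the proposition and Remark~\ref{ineq}.'' You have filled in the intended details: apply Proposition~\ref{bound} to the $(\rho,k)$-refined breaks, use the last sentence of Remark~\ref{ineq} for the term-by-term comparison with the SS$_k$-refined breaks (counted with multiplicities), and observe that the VC$_k$-refined breaks, when defined, coincide with the $(\rho,k)$-refined breaks for any valuation criterion element $\rho$.
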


\begin{proof}
This follows from the proposition and Remark~\ref{ineq}.
\end{proof}

\section{Scaffolds} \label{class}

In \cite[Theorem 18]{necessity}, it was observed that when the
VC$_p$-refined breaks attain the natural upper bounds given in
Proposition~\ref{bound}, the elements $\{\gamma_i\}$ which achieve
these bounds can be used to determine Galois module structure. These
elements $\gamma_i$ motivated a construction in \cite{scaf} referred
to as a ``Galois scaffold''. The properties of this Galois scaffold
led to the general definition of scaffold in \cite{bce}.  In this section, we
return to the construction in \cite{scaf}, but, as our aim is to study
the VC$_k$- and SS$_k$-refined breaks of these
extensions, we restrict
our attention to those extensions with only one ramification break.

Let $L/K$ be a $C_p^n$-extension with a single ramification break
$b>0$. As observed in section \ref{ref-brk-defn}, there is
Artin-Schreier data $(\beta,\vomega,\vepsilon)$ such that
$L=K(x_1,\dots,x_n)$, where $x_i\in K^{sep}$ is a root of the
polynomial $X^p-X-\omega_i^{p^n}\beta-\epsilon_i$ with
$v_K(\beta)=-b<v_K(\epsilon_i)$ and $\omega_i\in\F$. Recall that
$\omega_1,\ldots,\omega_n$ are linearly independent over $\F_p$.
We now consider, for each $1\leq r\leq n$, the restriction: for all
$1\le i\le n$,
\begin{equation}\label{filt}
  v_K(\epsilon_i)>-b/p^{n-r}.
\end{equation}
At one extreme, $r=n$, this is no additional restriction. At the other extreme, $r=1$, a Galois scaffold exists.

\subsection{The case $r=1$}
Observe that \eqref{filt} with $r=1$ 
is precisely 
Assumption 3.3 in \cite{large} 
for extensions with one ramification break $b>0$.
As a result, these extensions possess a Galois scaffold.  The original
construction of a Galois scaffold in \cite{scaf} can be
broken into two separate parts, as was done in
\cite{large}.  In
\cite[\S3]{large}, field elements of nice valuation are constructed
upon which the Galois action is easily described.  In
\cite[\S2]{large}, these elements and the nice description of the
Galois action are used to construct the two ingredients of a scaffold:
$\lambda_w\in L$ with $v_L(\lambda_w)=w$ for all $w\in\Z$, and
$\Psi_i\in K[G]$ for $1\le i\le n$ such that $\Psi_i(\lambda_w)\in L$
is congruent either to $\lambda_x$ for some $x\in\Z$, or
to 0.  In this section, we
introduce a method that allows us to more easily construct the field
elements of nice valuation constructed by \cite[\S3]{large}. Namely, we
construct $Y\in L$ such that $v_L(Y)=-b$ and $(\sigma-1)(Y)\in\F$ for
all $\sigma\in\Gal(L/K)$.  Since $p\nmid b$ the condition $v_L(Y)=-b$
implies $L=K(Y)$. We reference \cite[\S2]{large} for the construction of the rest
of the ingredients of the Galois scaffold.

Let $\x\in L^n$ be the column vector whose
$i$th entry is $x_i$ and define the Frobenius
endomorphism $\phi:L\ra L$ by $\phi(\alpha)=\alpha^p$.
Then $\phi(\x)=\x+\beta\phi^n(\vomega)+\vepsilon$.  Let
\begin{equation} \label{Y}
Y=\det([\x,\phi(\vomega),\phi^2(\vomega),\dots,
\phi^{n-1}(\vomega)]).
\end{equation}
By expanding in cofactors along the first column we get
\begin{equation} \label{cofactor}
Y=t_1x_1+t_2x_2+\cdots+t_nx_n,
\end{equation}
with $t_i\in\F$.  Let $\sigma\in G$ and set
$u_i=\sigma(x_i)-x_i\in\F_p$.  Then
\begin{equation} \label{sYY}
\sigma(Y)-Y=t_1u_1+t_2u_2+\cdots+t_nu_n\in\F.
\end{equation}

     Let $1\le i\le n$.  Since
$\omega_1,\ldots,\omega_{i-1},\omega_{i+1},\ldots,\omega_n$
are linearly independent over $\F_p$, the following
lemma implies $t_i\not=0$.

\begin{lemma} \label{nonz}
Let $\alpha_1,\ldots,\alpha_d\in\F$ be linearly
independent over $\F_p$, and let $\vec{\alpha}$ be the
column vector with $d$ entries whose $i$th entry is
$\alpha_i$.  Then
\[\det([\vec{\alpha},\phi(\vec{\alpha}),
\phi^2(\vec{\alpha}),\dots,\phi^{d-1}(\vec{\alpha})])
\not=0.\]
\end{lemma}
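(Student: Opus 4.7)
The plan is to prove the contrapositive: if the displayed determinant vanishes, then $\alpha_1,\ldots,\alpha_d$ must be linearly dependent over $\F_p$. This is essentially the classical nonvanishing of the Moore determinant, and the argument uses only additive/$\F_p$-linear polynomials.

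Concretely, I would assume the determinant is zero. Then the columns $\vec{\alpha},\phi(\vec{\alpha}),\ldots,\phi^{d-1}(\vec{\alpha})$ are linearly dependent over $\F$, so there exist $c_0,c_1,\ldots,c_{d-1}\in\F$, not all zero, with
\[
\sum_{j=0}^{d-1} c_j\,\alpha_i^{p^j}=0 \qquad \text{for } 1\le i\le d.
\]
Let $m$ be the largest index for which $c_m\neq 0$, and consider the polynomial
\[
P(X)=\sum_{j=0}^{m} c_j X^{p^j}\in\F[X].
\]
Each $\alpha_i$ is a root of $P$. The next step is the key observation: because we are in characteristic $p$, the polynomial $P$ is additive, i.e.\ $P(x+y)=P(x)+P(y)$. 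Moreover, since every $c\in\F_p$ satisfies $c^{p^j}=c$, we get $P(cx)=cP(x)$ for $c\in\F_p$, so $P$ is $\F_p$-linear. Consequently the set of roots of $P$ in $\overline{\F}$ forms an $\F_p$-vector subspace.

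Now I would combine a cardinality count with the independence hypothesis. The root set has size at most $\deg P = p^m \le p^{d-1}$, so as an $\F_p$-vector space it has dimension at most $d-1$. On the other hand, the $\F_p$-linear independence of $\alpha_1,\ldots,\alpha_d$ forces the root set to contain the $d$-dimensional $\F_p$-span $\F_p\alpha_1+\cdots+\F_p\alpha_d$, which has $p^d$ elements. This is a contradiction, so the determinant must be nonzero.

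The only subtlety — the ``main obstacle'' if there is one — is verifying the additivity of $P$ and the fact that roots of an $\F_p$-linear polynomial form an $\F_p$-subspace; both follow immediately from characteristic $p$, but they are what turn the purely numerical degree bound $p^{d-1}$ into the structural dimension bound $d-1$ needed to contradict $\F_p$-linear independence. No further computation is required.
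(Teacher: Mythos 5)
Your argument is correct. The paper itself does not reprove this fact: it identifies the matrix as a Moore matrix and simply cites Goss, Lemma~1.3.3, for the nonvanishing. Your proof is the standard self-contained argument behind that citation: from a nontrivial $\F$-linear dependence among the columns one builds a nonzero $p$-polynomial $P(X)=\sum_j c_j X^{p^j}$ vanishing at each $\alpha_i$; since $P$ is additive and $\F_p$-linear, its root set is an $\F_p$-subspace and therefore contains the $p^d$-element span $\F_p\alpha_1+\cdots+\F_p\alpha_d$, contradicting $\deg P\le p^{d-1}$. So you have in effect supplied the proof of the cited lemma rather than taken a genuinely different route, and there is nothing to fix.
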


\begin{proof}
Since $\alpha_1,\ldots,\alpha_d\in\F$ are linearly
independent over $\F_p$, this Moore determinant is
nonzero \cite[Lemma~1.3.3]{goss}.
\end{proof}

\begin{prop} \label{vLY}
We have $v_L(Y)=-b$, and hence $L=K(Y)$.
\end{prop}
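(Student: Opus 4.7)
The plan is to compute $Y^{p^n}$ and show that it equals $\beta M_\omega^{p^n}$ up to terms of strictly greater $v_L$-valuation, where $M_\omega:=\det([\vomega,\phi(\vomega),\dots,\phi^{n-1}(\vomega)])\in\F^{\times}$ by Lemma~\ref{nonz}. Since $v_L(\beta M_\omega^{p^n})=-bp^n$, this forces $v_L(Y^{p^n})=-bp^n$ and hence $v_L(Y)=-b$; then $L=K(Y)$ follows as noted in the text, using $p\nmid b$.

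Iterating the Artin-Schreier relation $\phi(\x)=\x+\beta\phi^n(\vomega)+\vepsilon$ yields
\[\phi^n(\x)=\x+\sum_{k=0}^{n-1}\bigl(\beta^{p^k}\phi^{n+k}(\vomega)+\phi^k(\vepsilon)\bigr).\]
Since $Y=\sum_j t_j x_j$ with $t_j\in\F$, characteristic $p$ gives $Y^{p^n}=\sum_j t_j^{p^n} x_j^{p^n}$. Substituting and regrouping,
\[Y^{p^n}=\sum_j t_j^{p^n} x_j \;+\; \sum_{k=0}^{n-1}\beta^{p^k}\phi^n(S_k) \;+\; \sum_j t_j^{p^n}\sum_{k=0}^{n-1}\epsilon_j^{p^k},\]
where $S_k:=\sum_j t_j\omega_j^{p^k}$ is the cofactor expansion along the first column of $\det([\phi^k(\vomega),\phi(\vomega),\phi^2(\vomega),\dots,\phi^{n-1}(\vomega)])$. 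For $1\le k\le n-1$ this determinant has a repeated column, so $S_k=0$; for $k=0$ it equals $M_\omega$. Hence the middle sum collapses to $\beta\phi^n(M_\omega)=\beta M_\omega^{p^n}$.

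It remains to bound the other two terms. The Artin-Schreier equation forces $v_L(x_j)=-bp^{n-1}$, so $v_L(\sum_j t_j^{p^n}x_j)\ge -bp^{n-1}>-bp^n$ (the case $n=1$ is immediate since $Y=x_1$). For the $\epsilon$-sum, the subsection's hypothesis \eqref{filt} with $r=1$ gives $v_K(\epsilon_j)>-b/p^{n-1}$, hence $v_L(\epsilon_j)>-bp$, so $v_L(\epsilon_j^{p^i})>-bp^{i+1}\ge -bp^n$ for each $0\le i\le n-1$. Both remaining terms thus have $v_L>-bp^n$, which forces $v_L(Y^{p^n})=-bp^n$ and $v_L(Y)=-b$. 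The main technical step is the Moore-cofactor identity $S_k=0$ for $1\le k\le n-1$; the rest is valuation bookkeeping, in which the hypothesis $r=1$ is essential to keep the $\epsilon$-contribution strictly below $v_L=-bp^n$.
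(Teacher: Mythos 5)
Your proof is correct, and it reaches the conclusion by a genuinely different route than the paper's. The paper works entirely at the level of determinants: it proves by induction that $\phi^j(Y)\equiv\det([\x,\phi^{j+1}(\vomega),\dots,\phi^{j+n-1}(\vomega)])\pmod{\M_L^{-bp^j+1}}$ for $0\le j\le n-1$, then one more application of $\phi$ collapses $\phi^n(Y)$ to $\beta\det([\phi^n(\vomega),\dots,\phi^{2n-1}(\vomega)])$ modulo higher-order terms; Lemma~\ref{nonz} then finishes. You instead work directly at the level of the cofactor expansion $Y=\sum_j t_jx_j$: you iterate the Artin--Schreier relation to express $x_j^{p^n}$ explicitly, then observe that the coefficient of $\beta^{p^k}$ is $\phi^n(S_k)$ with $S_k=\det([\phi^k(\vomega),\phi(\vomega),\dots,\phi^{n-1}(\vomega)])$, which vanishes by a repeated column for $1\le k\le n-1$ and equals the Moore determinant $M_\omega$ for $k=0$. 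Both arguments rest on the same two pillars --- the nonvanishing Moore determinant and the bound $v_L(\epsilon_i)>-bp$ from the $r=1$ hypothesis --- but your one-shot computation of $Y^{p^n}$ replaces the paper's step-by-step modular induction. What you gain is a cleaner, more transparent identification of exactly which term contributes the dominant valuation (only the $k=0$ term in the $\beta$-sum survives), without carrying congruences through $n$ steps; what the paper's inductive phrasing buys is a uniform treatment that feeds more directly into the analogous $\phi^j(Y)$ congruences used elsewhere in their scaffold construction. One small detail worth making explicit in your write-up: the identity $\sum_jt_jv_j=\det([\vec{v},\phi(\vomega),\dots,\phi^{n-1}(\vomega)])$ holds because the $t_j$ are precisely the first-column cofactors of that determinant; you use it silently when identifying $S_k$.
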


\begin{proof} We claim that for $0\le j\le n-1$ we have
\begin{align} \label{phijY}
\phi^j(Y)&\equiv\det([\x,\phi^{j+1}(\vomega),
\phi^{j+2}(\vomega),\dots,\phi^{j+n-1}(\vomega)])
\pmod{\M_L^{-bp^j+1}}.
\end{align}
The claim holds for $j=0$ by the definition of $Y$.  Let
$1\le j\le n-1$ and assume the claim holds for $j-1$.
Observe that $\phi(\x)=\x+\beta\phi^n(\vomega)+\vepsilon$ and that, because $r=1$,
$v_L(\epsilon_i)\ge-bp+1\ge-bp^j+1$. Therefore we get
\begin{alignat*}{2}
\phi^j(Y)&\equiv\phi(\det([\x,\phi^j(\vomega),
\phi^{j+1}(\vomega),\dots,\phi^{j+n-2}(\vomega)]))
&&\pmod{\M_L^{p(-bp^{j-1}+1)}} \\
&\equiv\det([\x+\beta\phi^n(\vomega)+\vepsilon,
\phi^{j+1}(\vomega),\phi^{j+2}(\vomega),\dots,
\phi^{j+n-1}(\vomega)])&&\pmod{\M_L^{-bp^j+1}} \\
&\equiv\det([\x+\beta\phi^n(\vomega),
\phi^{j+1}(\vomega),\phi^{j+2}(\vomega),\dots,
\phi^{j+n-1}(\vomega)])&&\pmod{\M_L^{-bp^j+1}}.
\end{alignat*}
Since $j+1\le n\le j+n-1$ it follows that
\[\phi^j(Y)\equiv\det([\x,\phi^{j+1}(\vomega),
\phi^{j+2}(\vomega),\dots,\phi^{j+n-1}(\vomega)])
\pmod{\M_L^{-bp^j+1}}.\]
Therefore by induction the claim holds for
$0\le j\le n-1$.  The same reasoning with $j=n$ gives
\[\phi^n(Y)\equiv\det([\x+\beta\phi^n(\vomega),
\phi^{n+1}(\vomega),
\phi^{n+2}(\vomega),\dots,\phi^{2n-1}(\vomega)])
\pmod{\M_L^{-bp^n+1}}.\]
Since $v_L(x_i)=-bp^{n-1}$ we get
\begin{alignat*}{2}
\phi^n(Y)&\equiv\det([\beta\phi^n(\vomega),
\phi^{n+1}(\vomega),
\phi^{n+2}(\vomega),\dots,\phi^{2n-1}(\vomega)])
&&\pmod{\M_L^{-bp^n+1}} \\
&\equiv\beta\det([\phi^n(\vomega),\phi^{n+1}(\vomega),
\phi^{n+2}(\vomega),\dots,\phi^{2n-1}(\vomega)])
&&\pmod{\M_L^{-bp^n+1}}.
\end{alignat*}
It follows from Lemma~\ref{nonz} that
$v_L(Y^{p^n})=v_L(\beta)$, and hence that
$p^nv_L(Y)=-bp^n$.  Thus $v_L(Y)=-b$.  Since $p\nmid b$
this implies $L=K(Y)$.
\end{proof}

     The main result of this section, Theorem \ref{scaffold}, says that the
extension $L/K$ possesses a Galois scaffold.  To
give the definition of a Galois scaffold, using notation consistent with \cite{large, bce}, we first
define $\a:\Z\ra\Z/p^n\Z$ by setting
$\a(t)=-b^{-1}t+p^n\Z$, where $b^{-1}$ denotes the
multiplicative inverse of the class of $b$ in
$\Z/p^n\Z$.  We
then express $\a(t)$ in base $p$ by writing
\[\a(t)=(\a(t)_{(0)}p^0+\a(t)_{(1)}p^1+\cdots
+\a(t)_{(n-1)}p^{n-1})+p^n\Z\]
with $0\le \a(t)_{(i)}<p$.  Specializing
Definition~2.3 of \cite{bce} to our setting we get:

\begin{definition}
Let $L/K$ be a totally ramified $C_p^n$-extension of
local fields with a single ramification break $b$.  A
Galois scaffold for $L/K$ with infinite precision
consists of elements $\lambda_w\in L$ for all $w\in\Z$
and $\Psi_i\in K[G]$ for $1\le i\le n$ such that the
following hold:
\begin{enumerate}[(i)]
\item $v_L(\lambda_w)=w$ for all $w\in\Z$.
\item $\lambda_{w_1}\lambda_{w_2}^{-1}\in K$
whenever $w_1\equiv w_2\pmod{p^n}$.
\item $\Psi_i\cdot1=0$ for $1\le i\le n$.
\item For $1\le i\le n$ and $w\in\Z$ there exists
$u_{iw}\in\OO_K^{\times}$ such that the following
holds:
\[\Psi_i(\lambda_w)=\begin{cases} 
u_{iw}\lambda_{w+p^{n-i}b}&
\mbox{if }\a(w)_{(n-i)}\ge1, \\ 
0&\mbox{if }\a(w)_{(n-i)}=0.
\end{cases} \]
\end{enumerate}
\end{definition}

\begin{theorem} \label{scaffold}
Let $L/K$ be a $C_p^n$-extension with a single ramification break $b$.
Assume there is Artin-Schreier data $(\beta,\vomega,\vepsilon)$ for
$L/K$ such that $v_K(\epsilon_i)>-b/p^{n-1}$ for $1\le i\le n$.  Then
the extension $L/K$ has a Galois scaffold $(\{\lambda_w\},\{\Psi_i\})$
with infinite precision such that $u_{iw}=1$ for all $i,w$ and $\Psi_i\in\F[G]$ for $1\le i\le n$.
Furthermore, the image of $1+\Psi_i$ in
$\Rbar=\F[G]/A^p$ lies in $\Gbar^{[\F]}$.
\end{theorem}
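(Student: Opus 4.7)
The plan is to build the Galois scaffold from the element $Y$ in \eqref{Y}: since $v_L(Y)=-b$ and $(\sigma-1)(Y)=T(\sigma)\in\F$ for every $\sigma\in G$ (writing $T(\sigma):=\sigma(Y)-Y$), the $p$-power iterates $X_i:=Y^{p^{n-i}}$ have valuations $v_L(X_i)=-bp^{n-i}$ and Galois action $(\sigma-1)(X_i)=T(\sigma)^{p^{n-i}}\in\F$. These are precisely the nice-valuation generators that serve as input to the scaffold construction of \cite[\S2]{large}, which I invoke to obtain the $\lambda_w$'s (built as products of the $X_i$'s with an appropriate power of a uniformizer $\pi_K$) and operators $\Psi_i\in K[G]$ satisfying (i)--(iv) with some $u_{iw}\in\OO_K^{\times}$; condition (iii) is automatic since $\Psi_i\in A$.

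Next I strengthen the conclusion to $\Psi_i\in\F[G]$ and $u_{iw}=1$. The first is a matter of inspecting the construction in \cite[\S2]{large}: because the Galois action on each $X_i$ takes values in $\F\subset K$, the scalars appearing in the formulas defining $\Psi_i$ all lie in $\F$. For $u_{iw}=1$ I rescale $\lambda_w$ by an $\F^{\times}$-factor $\mu_{j_w}$ depending only on $j_w:=\a(w)\in[0,p^n)$: the leading term of $\Psi_i(\lambda_w)$ is governed by $\binom{j_w}{p^{n-i}}\equiv\a(w)_{(n-i)}\pmod{p}$ (by Lucas's theorem) together with a product of powers of the $T(\sigma_k)$'s, and the choice $\mu_{j_w}=\prod_{i=1}^n \mu_i\bigl((j_w)_{(n-i)}\bigr)$ for appropriate $\mu_i\colon\{0,\ldots,p-1\}\ra\F^{\times}$ cancels these unit scalars digit-by-digit, with consistency across different $i$ guaranteed because each factor depends on an independent base-$p$ digit of $j_w$.

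For the final claim $\overline{1+\Psi_i}\in\Gbar^{[\F]}$, I fix an $\F_p$-basis $\sigma_1,\dots,\sigma_n$ of $G$; by Lemma~\ref{nonz} the Moore matrix $M$ with entries $M_{k,j}:=T(\sigma_k)^{p^{n-j}}$ is invertible over $\F$, and with $N:=M^{-1}$ the element $\eta_i:=\prod_{k=1}^{n}\sigma_k^{[N_{i,k}]}$ manifestly lies in $\Gbar^{[\F]}$ modulo $A^p$ by the definition of the $\F$-vector space structure on $(1+A)/(1+A^p)$. The main obstacle is identifying $\overline{1+\Psi_i}$ with $\overline{\eta_i}$ in $\Rbar$. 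A direct computation using $(\sigma_k-1)^l(Y^{p^{n-j}})=0$ for $l\ge2$ yields $\eta_i(Y^{p^{n-j}})=Y^{p^{n-j}}+\delta_{i,j}$, matching the scaffold action of $\Psi_i$ on the $X_j$; the identification then extends via the $\F$-linear isomorphism $\Lambdabar_p$ of Corollary~\ref{log} by comparing $\ell_p(\eta_i)\equiv\sum_k N_{i,k}\ell_p(\sigma_k)$ with $\ell_p(1+\Psi_i)$ modulo $A^p$. Equivalently---and this is how I would actually carry out the proof---one replaces the $\Psi_i$ produced by \cite{large} with $\eta_i-1$ from the outset and verifies directly, using the truncated-exponentiation congruences from Proposition~\ref{lpXZ}, that this choice satisfies (iv) with $u_{iw}=1$ for the rescaled $\lambda_w$.
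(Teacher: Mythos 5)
The proposal diverges from the paper's proof in a way that creates a genuine gap.

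The paper does not take a single element $Y$ and raise it to $p$-th powers. Instead, it constructs a \emph{tower} of partial Moore determinants $Y_1,\ldots,Y_n$, with $Y_i$ built from $x_1,\ldots,x_i$ and $\omega_1,\ldots,\omega_i$ only, so that $Y_i$ generates the intermediate field $K_i=K(x_1,\ldots,x_i)$ over $K$. Dual generators $\sigma_1,\ldots,\sigma_n$ of $G$ (with $(\sigma_i-1)(x_j)=\delta_{ij}$) then give $(\sigma_j-1)(Y_i)=0$ for $j>i$: the Galois action on the $Y_i$ is triangular, which is exactly what Theorem~2.10 of \cite{large} needs as input. Your $X_i:=Y^{p^{n-i}}$ break this structure at the outset: since $(\sigma-1)(X_i)=T(\sigma)^{p^{n-i}}\ne0$ for all $\sigma\ne1$, each $X_i$ generates all of $L$, and the matrix $(T(\sigma_k)^{p^{n-j}})_{k,j}$ is an invertible Moore matrix that can never be made triangular by a choice of basis for $G$. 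So the $X_i$ are \emph{not} ``precisely the nice-valuation generators'' that \cite[\S2]{large} takes as input, and the reduction to that reference does not apply.

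Your attempted repair — setting $\eta_i:=\prod_k\sigma_k^{[N_{ik}]}$ with $N=M^{-1}$ so that $\eta_i(Y^{p^{n-j}})=Y^{p^{n-j}}+\delta_{ij}$, and then declaring $\Psi_i=\eta_i-1$, $\lambda_w=\pi_K^m\prod_j\binom{Y^{p^{n-j}}}{s_j}$ — runs into the key obstruction: truncated exponentiation is only translation \emph{modulo higher-order terms}, and the scaffold axiom (iv) with infinite precision requires an exact identity, not a congruence. Concretely, write $Z_j=Y^{p^{n-j}}$ and $c_{kj}=T(\sigma_k)^{p^{n-j}}\in\F$, so $\sigma_k(Z_j)=Z_j+c_{kj}$. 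For monomials of total $Z$-degree $<p$ one does have $\sigma_k^{[\alpha]}\bigl(\prod_j Z_j^{s_j}\bigr)=\prod_j(Z_j+\alpha c_{kj})^{s_j}$, but for $\sum_j s_j\ge p$ this fails whenever $\alpha\notin\F_p$. For instance with $p=3$ and $s=(2,1)$, the constant term of $\sigma_k^{[\alpha]}(Z_1^2Z_2)$ is $\alpha c_{k1}^2c_{k2}$, whereas the constant term of $(Z_1+\alpha c_{k1})^2(Z_2+\alpha c_{k2})$ is $\alpha^3 c_{k1}^2c_{k2}$, and these disagree for $\alpha\notin\F_3$. Since for $n\ge2$ many of the $\lambda_w$ involve monomials with $\sum_j s_j\ge p$, your $\Psi_i(\lambda_w)$ will differ from $\lambda_{w+bp^{n-i}}$ by unexplained lower-order terms, so axiom (iv) with $u_{iw}=1$ (or even with arbitrary $u_{iw}\in\OO_K^{\times}$) has not been verified, and Proposition~\ref{lpXZ}, being a congruence mod $X^p$, cannot promote this to an equality. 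The paper sidesteps precisely this difficulty by building the $\Psi_i$ \emph{recursively} (the $\Theta_j$ of \cite[Def.~2.7]{large}, spelled out in Section~\ref{comp} of the paper), where the inner corrections $\Theta_k^{[-t_{kj}]}$ for $k>j$ cancel the error terms exactly at each stage; that recursion is what makes the infinite-precision identity hold, and it relies on the tower $Y_1,\ldots,Y_n$ that you discarded.

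Two smaller points. Your digit-by-digit rescaling of $\lambda_w$ to force $u_{iw}=1$ is unnecessary: with the paper's normalization $X_i=Y_i/\mu_{ii}$ giving $(\sigma_i-1)(X_i)=1$, Theorem~2.10 of \cite{large} already delivers $u_{iw}=1$. And your final argument that $\overline{\eta_i}\in\Gbar^{[\F]}$ via $\Lambdabar_p$ is fine in itself, but it only becomes relevant once the $\Psi_i$ have been correctly constructed.
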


\begin{proof}
For $1\le i\le n$ let $x_i\in K^{sep}$ be a root of
$X^p-X-\omega_i^{p^n}\beta-\epsilon_i$.  For $0\le i\le n$ set
$K_i=K(x_1,\ldots,x_i)$, so that $K=K_0$ and $L=K_n$.  Let
$\sigma_1,\ldots,\sigma_n$ be generators for $G=\Gal(L/K)$ such that
$(\sigma_i-1)(x_j)=\delta_{ij}$ for $1\le i,j\le n$.  For $1\le i\le
n$ construct a generator $Y_i$ for the extension $K_i/K$ as in
(\ref{Y}).  For $1\le j\le i$ set $\mu_{ij}=(\sigma_j-1)(Y_i)$.  Then
$\mu_{ij}\not=0$ since $K_i=K(Y_i)$, and by (\ref{sYY}) we have
$\mu_{ij}\in\F$.  Set $X_i=Y_i/\mu_{ii}$.  Then $K_i=K_{i-1}(X_i)$ and
$(\sigma_i-1)(X_i)=1$.  It follows from Theorem~2.10 in \cite{large}
that $L/K$ has a Galois scaffold $(\{\lambda_w\},\{\Psi_i\})$ with
infinite precision such that $u_{iw}=1$ for all $i,w$. (This result appeared
first as Corollary~4.2 in \cite{scaf}.)  Since $\mu_{ij}\in\F$ it
follows from Definition~2.7 in \cite{large} that $\Psi_i\in\F[G]$ and
$(1+\Psi_i)+A^p\in\Gbar^{[\F]}$ for $1\le i\le n$.
\end{proof}

Let $(\{\lambda_w\},\{\Psi_i\})$ be the scaffold for
$L/K$, and let $\rho$ be a valuation criterion element
for $L/K$.  Then $v_L(\rho)\equiv b\pmod{p^n}$, so we
have $\a(v_L(\rho))=(p^n-1)+p^n\Z$.  For $0\le t<p^n$
write $t=t_{(0)}+t_{(1)}p+\cdots+t_{(n-1)}p^{n-1}$
with $0\le t_{(i)}<p$ and define
$\Psi^{(t)}=\Psi_n^{t_{(0)}}\Psi_{n-1}^{t_{(1)}}
\ldots\Psi_1^{t_{(n-1)}}$.  Using induction we see that
for $0\le t<p^n$ we have
\begin{align}
v_L(\Psi^{(t)}(\rho))&=v_L(\rho)+bt \label{Psit} \\
\a(v_L(\Psi^{(t)}(\rho)))&=(p^n-1-t)+p^n\Z, \nonumber
\end{align}
while for general $\alpha\in L$, we have $v_L(\Psi^{(t)}(\alpha))\geq v_L(\alpha)+bt$.

\subsection{The case $r>1$}
The existence of a Galois scaffold, or even a partial Galois scaffold,
can be used to determine the values of VC$_k$- and SS$_k$-refined
breaks.  In this section we examine conditions that produce partial
 Galois scaffolds.  To begin we need to look more closely at
$C_p^2$-extensions.  The following lifting lemma will enable us to
lift Artin-Schreier data, in particular $\beta\in K$ and $\vepsilon\in
K^n$, up into $K(x_1)$ and $K(x_1)^n$, respectively. This lemma is
crucial, both here and in the next section.

\begin{lemma} \label{zE} 
Let $M/K$ be a totally ramified $C_p^2$-extension with a
single ramification break $b$ and Artin-Schreier data
$(\beta,\vomega,\vepsilon)$.
Assume without loss of generality that
$\omega_1=1$ and $\epsilon_1=0$, so that $M=K(x_1,x_2)$
with $x_1^p-x_1=\beta$ and $x_2^p-x_2
=\omega_2^{p^2}\beta+\epsilon_2$.  Then
the following
hold:
\begin{enumerate}[(a)]
\item There exist $\z,E\in K(x_1)$ such that
$\z^p-\z=E-\epsilon_2$ and
$v_{K(x_1)}(\z)=v_{K(x_1)}(E)=-e_2$,  where
$e_2=-v_K(\epsilon_2)$.  Furthermore,
$v_{K(x_1)}(E-\wp(\omega_2)^px_1)=-b$.
\item Let $\z,E$ be as in (a) and set
$\X=x_2-\omega_2^px_1+\z$.  Then
$\X^p-\X=-\wp(\omega_2)^px_1+E$ and $M=K(\X)$.
\end{enumerate}
\end{lemma}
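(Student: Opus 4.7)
The plan is to prove (a) by an explicit construction of $\z$ in a convenient uniformizer of $K(x_1)$, and then derive (b) from (a) by direct computation together with a short Galois argument. Throughout, we use the Artin-Schreier normalization to assume either $\epsilon_2\in\F$ (the trivial case, handled by $\z=(-\epsilon_2)^{1/p}$, $E=-\z$) or $p\nmid e_2$; below we treat the latter.

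For (a), pick positive integers $a,c$ with $-ab+cp=1$ (possible since $\gcd(b,p)=1$) and set $\pi=x_1^at^c$, which is a uniformizer of $K(x_1)$. Writing $\beta=\beta_0t^{-b}(1+O(t))$ with $\beta_0\in\F^\times$ and using $x_1^p=x_1+\beta$ together with $cp=ab+1$, one finds
\[\pi^p=t\cdot(x_1^pt^b)^a=t\cdot(\beta_0+t^bx_1+O(t))^a.\]
Since $v_{K(x_1)}(t^bx_1)=b(p-1)$, this gives $t/\pi^p=u_0+O(\pi^{b(p-1)})$ with $u_0=\beta_0^{-a}\in\F^\times$; after rescaling $\pi$ by $u_0^{1/p}\in\F^\times$ we may take $u_0=1$, so $t/\pi^p\in 1+\pi^{b(p-1)}\OO_{K(x_1)}$. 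Expanding $\epsilon_2=\sum_{j\ge-e_2}c_jt^j$ in $\pi$ yields $\epsilon_2=\sum_jc_j\pi^{pj}+R$ with $v_{K(x_1)}(R)\ge-pe_2+b(p-1)$. Because $b>e_2$ forces $-pe_2+b(p-1)>-e_2$, the $\pi^k$-coefficient $a_k$ of $\epsilon_2$ vanishes for every $k\in[-pe_2,-e_2]$ with $p\nmid k$, and also at $k=-e_2$.

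Now define $\z=\sum_{-e_2\le j\le-\lceil e_2/p\rceil}b_j\pi^j$ with $b_j\in\F$ chosen so that $b_j^p=-a_{pj}$; such $b_j$ exist because $\F$ is perfect, and $b_{-e_2}\ne 0$ since $a_{-pe_2}$ is the nonzero leading coefficient of $\epsilon_2$. Hence $v_{K(x_1)}(\z)=-e_2$. Set $E=\z^p-\z+\epsilon_2$. For each $k<-e_2$, the $\pi^k$-coefficient of $\z^p+\epsilon_2$ equals $b_{k/p}^p+a_k=0$ if $p\mid k$, and $a_k=0$ if $p\nmid k$; at $k=-e_2$ it equals $a_{-e_2}=0$. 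Therefore $v_{K(x_1)}(\z^p+\epsilon_2)>-e_2$, and $E=(\z^p+\epsilon_2)-\z$ has leading term $-b_{-e_2}\pi^{-e_2}$, giving $v_{K(x_1)}(E)=-e_2$. The claim $v_{K(x_1)}(E-\wp(\omega_2)^px_1)=-b$ is immediate since $-e_2>-b=v_{K(x_1)}(\wp(\omega_2)^px_1)$.

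For (b), set $y=x_2-\omega_2^px_1$. Using $x_1^p=x_1+\beta$ and $x_2^p=x_2+\omega_2^{p^2}\beta+\epsilon_2$, a direct computation gives $y^p-y=\epsilon_2-\wp(\omega_2)^px_1$, whence
\[\X^p-\X=(y^p-y)+(\z^p-\z)=-\wp(\omega_2)^px_1+E.\]
Since $\X-x_2\in K(x_1)$ and $x_2\notin K(x_1)$, we have $K(x_1)(\X)=M$. To show $K(\X)=M$, suppose to the contrary that some nontrivial $\sigma\in\Gal(M/K)$ fixes $\X$; writing $\sigma(x_i)=x_i+u_i$ with $u_i\in\F_p$, the condition $\sigma(\X)=\X$ reduces to $\sigma(\z)-\z=\omega_2^pu_1-u_2\in\F$. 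But $\sigma(\pi)/\pi=(1+u_1/x_1)^a\in 1+\pi^b\OO_{K(x_1)}$, so term-by-term in $\z=\sum b_j\pi^j$ one gets $v_{K(x_1)}(\sigma(\z)-\z)\ge b-e_2>0$. Hence $\sigma(\z)-\z=0$ and $u_2=\omega_2^pu_1$; if $u_1\ne 0$ this forces $\omega_2^p\in\F_p$, contradicting the $\F_p$-linear independence of $1$ and $\omega_2$, and if $u_1=0$ then $u_2=0$ and $\sigma=1$. Either case yields a contradiction, so $K(\X)=M$. The main obstacle is the coefficient-by-coefficient verification in (a), which critically depends on the identity $t/\pi^p\equiv 1\pmod{\pi^{b(p-1)}}$ derived from the Artin-Schreier relation for $x_1$.
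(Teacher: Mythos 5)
Your part (b) is correct: the Artin--Schreier computation for $\X$ matches the paper, and your Galois argument for $M=K(\X)$ is a valid alternative to the paper's shorter observation that $v_M(\X)=-b$ with $p\nmid b$, whence $K(\X)=M$.

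Your part (a) takes a genuinely different route from the paper (the paper notes that $K(x_1,z)/K$ has upper breaks $e_2<b$, so $K(x_1,z)/K(x_1)$ still has break $e_2$, and Artin--Schreier theory over $K(x_1)$ supplies $\z,E$ directly), and there is a gap in the precision estimate at the heart of your explicit construction. From $\pi^p=t\,(x_1t^b+\beta t^b)^a$ one obtains $t/\pi^p=(\beta t^b)^{-a}\bigl(1+O(\pi^{b(p-1)})\bigr)$, and $(\beta t^b)^{-a}$ is a unit of $\OO_K$, not an element of $\F$. It equals $\beta_0^{-a}$ plus a term of $\M_K$, and that correction has $K(x_1)$-valuation $\ge p$, which is strictly less than $b(p-1)$ as soon as $b\ge 2$. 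So your derivation only yields $t/\pi^p=u_0+O(\pi^{\min(p,\,b(p-1))})$, hence $v_{K(x_1)}(R)\ge -pe_2+\min(p,b(p-1))$. When $b\ge 2$ and $e_2\ge 2$ this is not $>-e_2$, so the vanishing of $a_k$ for $p\nmid k$ with $k\in[-pe_2,-e_2]$ does not follow from the bound you state, and the construction of $\z$ is not justified as written. The vanishing is in fact true, but for a subtler reason your argument doesn't supply: the low-valuation part of $t/\pi^p-u_0$ lies in $K$, and one can show (e.g.\ by applying a generator $\sigma$ of $\Gal(K(x_1)/K)$ to $\alpha=\sum a_m\pi^m$ and using $v_{K(x_1)}(\sigma\pi/\pi-1)=b$) that any $\alpha\in K$ has $a_m=0$ whenever $p\nmid m$ and $m<v_{K(x_1)}(\alpha)+b(p-1)$. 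That Galois-invariance fact is what actually makes your coefficient computation close, but it must be proved, and at that point you are essentially reconstructing in coordinates the ramification argument the paper uses directly.
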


\begin{proof}
(a) Let $z\in K^{sep}$ satisfy $z^p-z=\epsilon_2$.  By
the definition of Artin-Schreier data we have either
$e_2>0$ and $p\nmid e_2$, or $\epsilon_2\in\F$.
Suppose $e_2>0$ and $p\nmid e_2$.  Then $b,e_2$ are the
upper ramification breaks of $K(x_1,z)/K$.  Since
$e_2<b$ it follows that $e_2$ is also a lower
ramification break of $K(x_1,z)/K$.  Hence
$K(x_1,z)/K(x_1)$ is a totally ramified $C_p$-extension
with ramification break $e_2$.  Hence by Artin-Schreier
theory there are $\z,E\in K(x_1)$ such that
$(z+\z)^p-(z+\z)=E$ and
$v_{K(x_1)}(\z)=v_{K(x_1)}(E)=-e_2$.  If
$\epsilon_2\in\F^{\times}$ we set $\z=1$ and
$E=\epsilon_2$.  Then
$v_{K(x_1)}(\z)=v_{K(x_1)}(E)=0=-e_2$.  If
$\epsilon_2=0$ we set $\z=E=0$.  Then
$v_{K(x_1)}(\z)=v_{K(x_1)}(E)=\infty=-e_2$.  Hence
$\z^p-\z=E-\epsilon_2$ and
$v_{K(x_1)}(\z)=v_{K(x_1)}(E)=-e_2$ in all cases.  In
addition, since $1,\omega_2$ are linearly independent
over $\F_p$ we have $\omega_2\not\in\F_p$, and hence
$\wp(\omega_2)\not=0$.  Since
\[v_{K(x_1)}(E)=-e_2>-b=v_{K(x_1)}(x_1)\]
it follows that $v_{K(x_1)}(E-\wp(\omega_2)^px_1)=-b$.
\\[\smallskipamount]
(b) By the definition of $\X$ we get
\begin{align*}
\X^p-\X&=x_2^p-x_2-\omega_2^{p^2}x_1^p+\omega_2^px_1
+\zeta^p-\zeta \\
&=\omega_2^{p^2}\beta+\epsilon_2-\omega_2^{p^2}\beta
-\omega_2^{p^2}x_1+\omega_2^px_1+E-\epsilon_2 \\
&=-\wp(\omega_2)^px_1+E.
\end{align*}
Since $\X\in M$ and $v_M(\X)=-b$ with $p\nmid b$ it
follows that $M=K(\X)$.
\end{proof}

\begin{prop} \label{LK1}
Let $L/K$ be a $C_p^n$-extension with a single
ramification break $b$.  Let $0<c<b$, and assume that there
exists Artin-Schreier data $(\beta,\vomega,\vepsilon)$
for $L/K$ such that $v_K(\epsilon_i)>-c$ for
$1\le i\le n$.  Let $x_1\in K^{sep}$ satisfy
$x_1^p-x_1=\omega_1^{p^n}\beta+\epsilon_1$, and set
$K_1=K(x_1)$.  Then there is Artin-Schreier data
$(x_1,\vOmega,\vE)$ for $L/K_1$ such that
$v_{K_1}(E_i)>-c$ for $2\le i\le n$.
\end{prop}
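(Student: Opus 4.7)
The plan is to reduce to the case $n=2$, which is handled by Lemma~\ref{zE}. For each $2\le i\le n$, the intermediate field $M_i=K(x_1,x_i)$ is a Galois subextension of $L/K$ with $\Gal(M_i/K)\cong C_p^2$, and it inherits the single ramification break $b$ (by Herbrand's theorem applied to quotients). Thus $M_i/K$ admits the Artin-Schreier data $(\beta,(1,\omega_i),(0,\epsilon_i))$, satisfying the hypotheses of Lemma~\ref{zE}, and I apply that lemma to each $M_i/K$ independently.

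The lemma produces $\zeta_i, E_i\in K_1$ with $v_{K_1}(E_i)=-e_i$, where $e_i=-v_K(\epsilon_i)$, together with $\X_i=x_i-\omega_i^p x_1+\zeta_i\in M_i$ satisfying
\[\X_i^p-\X_i=-\wp(\omega_i)^p x_1+E_i.\]
The hypothesis $v_K(\epsilon_i)>-c$ forces $e_i<c$, so $v_{K_1}(E_i)>-c$ as required. Since $\F$ is perfect, I define $\Omega_i\in\F$ by $\Omega_i^{p^{n-1}}=-\wp(\omega_i)^p$, whereupon the equation reads $\X_i^p-\X_i=\Omega_i^{p^{n-1}}x_1+E_i$, matching the form required of Artin-Schreier data for the $C_p^{n-1}$-extension $L/K_1$ (with $x_1$ in the role of $\beta$ and $v_{K_1}(x_1)=-b$). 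Since $\X_i$ generates $M_i/K_1$, the collection $\X_2,\ldots,\X_n$ generates $L/K_1$.

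It remains to check that $\Omega_2,\ldots,\Omega_n$ are linearly independent over $\F_p$, which is precisely the condition ensuring that the resulting data corresponds to an extension with a single ramification break. Since Frobenius is a bijection on $\F$, this is equivalent to $\F_p$-linear independence of $\wp(\omega_i)$ for $2\le i\le n$; and since $\ker\wp\cap\F=\F_p$, this reduces further to $\F_p$-linear independence of the images of $\omega_2,\ldots,\omega_n$ in $\F/\F_p$. The latter follows from the hypothesis that $\omega_1=1,\omega_2,\ldots,\omega_n$ are $\F_p$-linearly independent in $\F$, together with the fact that $\omega_1=1\in\F_p$ alone accounts for the kernel direction.

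The main technical subtlety is the coefficient matching step: the coefficient $-\wp(\omega_i)^p$ delivered by Lemma~\ref{zE} (appropriate for a $C_p^2$-extension) must be rewritten as $\Omega_i^{p^{n-1}}$ (appropriate for a $C_p^{n-1}$-extension), which forces the extraction of $p^{n-2}$-th roots in $\F$; this is legal precisely because $\F$ is perfect, and the whole construction collapses without that hypothesis.
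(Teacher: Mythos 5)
Your approach---reducing to the $n=2$ case via the intermediate $C_p^2$-extensions $M_i=K(x_1,x_i)$ and applying Lemma~\ref{zE} to each---is essentially the one the paper uses, and your Herbrand-style argument showing each $M_i/K$ inherits the single ramification break $b$ is correct, as is the reduction of $\F_p$-linear independence of the $\Omega_i$ to that of $\wp(\omega_2),\ldots,\wp(\omega_n)$. However, there is a genuine error in the Frobenius exponents that makes your central displayed formula wrong as written, and it is this error that manufactures the spurious ``main technical subtlety'' you highlight at the end.

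Here is the problem. The Artin-Schreier data for $L/K$ has $x_i^p-x_i=\omega_i^{p^n}\beta+\epsilon_i$. When you pass to $M_i/K$, which has degree $p^2$, the Artin-Schreier data for $M_i/K$ that keeps $x_1,x_i$ as generators must have its $\omega$-component adjusted: you need $(\tilde\omega_2)^{p^2}=\omega_i^{p^n}$, i.e.\ $\tilde\omega_2=\omega_i^{p^{n-2}}$, not $\omega_i$. The triple $(\beta,(1,\omega_i),(0,\epsilon_i))$ is indeed valid Artin-Schreier data for $M_i/K$, but only for a \emph{different} second generator $\tilde{x}_i$, a root of $X^p-X-\omega_i^{p^2}\beta-\epsilon_i$, which equals your $x_i$ only when $n=2$. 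If you nonetheless keep your $x_i$ and your exponent in $\X_i=x_i-\omega_i^p x_1+\zeta_i$, a direct computation gives
\[
\X_i^p-\X_i=(\omega_i^{p^n}-\omega_i^{p^2})\beta-\wp(\omega_i)^p x_1+E_i,
\]
and the leftover term $(\omega_i^{p^n}-\omega_i^{p^2})\beta$, which has valuation $-b$ and does not vanish when $n>2$, means your displayed equation is false.

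The correct step, which is what the paper does, is to set $\X_i=x_i-\omega_i^{p^{n-1}}x_1+\zeta_i$ (the elements $\zeta_i,E_i$ from Lemma~\ref{zE}(a) depend only on $\epsilon_i$ and are unaffected). Running the Lemma~\ref{zE}(b) computation with these exponents then gives exactly $\X_i^p-\X_i=-\wp(\omega_i)^{p^{n-1}}x_1+E_i$, and one may take $\Omega_i=-\wp(\omega_i)$, since $\Omega_i^{p^{n-1}}=-\wp(\omega_i)^{p^{n-1}}$. In particular no extraction of $p^{n-2}$-th roots is required at all; the ``subtlety'' you flag---and the claim that the whole construction collapses without perfectness of $\F$ at that step---is entirely an artifact of the mismatched exponents. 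Once the exponents are corrected, your argument coincides with the paper's proof.
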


\begin{proof}
By replacing $\beta$ with
$\beta'=\omega_1^{p^n}\beta+\epsilon_1$ me may assume
without loss of generality that $\omega_1=1$ and
$\epsilon_1=0$.  For $2\le i\le n$ let $x_i\in K^{sep}$
satisfy $x_i^p-x_i=\omega_i^{p^n}\beta+\epsilon_i$.  By
applying Lemma~\ref{zE} to $K(x_1,x_i)/K$ we get
$\z_i,E_i\in K_1$ such that $\z_i^p-\z_i=E_i-\epsilon_i$
and $v_{K_1}(\z_i)=v_{K_1}(E_i)=v_L(\epsilon_i)$.
Set $\X_i=x_i-\omega_i^{p^{n-1}}x_1+\z_i$.  Then
$K_1(\X_i)=K_1(x_i)$, so we have
\[L=K_1(x_2,\dots,x_n)=K_1(\X_2,\ldots,\X_n).\]
We also have
$\X_i^p-\X_i=-\wp(\omega_i)^{p^{n-1}}x_1+E_i$ with
$v_{K_1}(x_1)=-b$ and $v_{K_1}(E_i)=v_L(\epsilon_i)>-c$.
Since $\Span_{\F_p}\{1,\omega_2,\ldots,\omega_n\}$ has
$\F_p$-dimension $n$, and $\wp:\F\ra\F$ is an
$\F_p$-linear map with kernel $\F_p$, we see that
$\wp(\omega_2),\ldots,\wp(\omega_n)$ are linearly
independent over $\F_p$.  Setting
\[\vOmega=\begin{bmatrix}-\omega_2^{p^{n-1}}\\\vdots\\
-\omega_n^{p^{n-1}}\end{bmatrix}\hspace{1cm}
\vE=\begin{bmatrix}E_2\\\vdots\\E_n\end{bmatrix}\]
we deduce that $(x_1,\vOmega,\vE)$ is Artin-Schreier
data for $L/K_1$.
\end{proof}

\begin{cor} \label{LKr}
Let $L/K$ be a $C_p^n$-extension with a single
ramification break $b$.  Let $0<c<b$, and assume that there
exists Artin-Schreier data $(\beta,\vomega,\vepsilon)$
for $L/K$ such that $v_K(\epsilon_i)>-c$ for
$1\le i\le n$.  Let $1\le r\le n-1$, and for $1\le i\le r$
let $x_i\in K^{sep}$ satisfy
$x_i^p-x_i=\omega_i^{p^n}\beta+\epsilon_i$.  Set
$K_r=K(x_1,\ldots,x_r)$.  Then there is Artin-Schreier
data $(x_r,\vOmega,\vE)$ for $L/K_r$ such that
$v_{K_r}(E_i)>-c$ for $r+1\le i\le n$.
\end{cor}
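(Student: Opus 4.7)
The plan is to prove the corollary by a direct construction, generalizing the arguments in Lemma~\ref{zE} and Proposition~\ref{LK1}. For each $i$ with $r+1 \le i \le n$ I will construct $\Omega_i \in \F$, $Y_i \in L$, and $E_i \in K_r$ such that $Y_i^p - Y_i = \Omega_i^{p^{n-r}} x_r + E_i$ and $v_{K_r}(E_i) > -c$. An inductive argument with base case Proposition~\ref{LK1} is also natural, but for $r \ge 3$ the element $x_{r-1}$ has valuation $v_{K_{r-1}}(x_{r-1}) = -bp^{r-2}$ in $K_{r-1}$, so Proposition~\ref{LK1} does not strictly apply as a black box to $L/K_{r-1}$ and one must track the construction more carefully.

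Here is the direct construction. From $x_r^p - x_r = \omega_r^{p^n}\beta + \epsilon_r$ one solves $\omega_r^{p^n}\beta = \wp(x_r) - \epsilon_r$ and substitutes into $x_i^p - x_i = \omega_i^{p^n}\beta + \epsilon_i$. Setting $a_i = (\omega_i/\omega_r)^{p^{n-1}} \in \F$ and using the identity $a_i^p\wp(x_r) = \wp(a_i x_r) - \wp(a_i)\,x_r$ (valid because $a_i \in \F$), one obtains $\wp(x_i - a_i x_r) = -\wp(a_i)\,x_r + (\epsilon_i - a_i^p\epsilon_r)$. Choose $\Omega_i \in \F$ with $\Omega_i^{p^{n-r}} = -\wp(a_i)$, which is possible since $\F$ is perfect, and set $\alpha_i = \epsilon_i - a_i^p\epsilon_r \in K$.

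The key step is to absorb $\alpha_i$ into $\wp(K_r)$: since $\alpha_i \in K$ with $v_K(\alpha_i) > -c$, we have $v_{K_r}(\alpha_i) > -p^r c$ and $p^r$ divides $v_{K_r}(\alpha_i)$. Iterative Artin-Schreier reduction in $K_r$ (at each step subtracting $\wp(u)$ where $u \in K_r$ has valuation equal to $1/p$ times the current leading valuation, extracting a $p$-th root of the leading coefficient in the perfect field $\F$) produces $w_i \in K_r$ such that $E_i := \alpha_i + \wp(w_i)$ has $v_{K_r}(E_i) > -c$; this terminates after at most $r$ steps because the valuation is divided by $p$ each time. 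Setting $Y_i = x_i - a_i x_r + w_i$ yields the desired equation. Then $L = K_r(Y_{r+1}, \ldots, Y_n)$ follows from $Y_i - x_i \in K_r$, and the $\F_p$-linear independence of $\Omega_{r+1}, \ldots, \Omega_n$ reduces (via $\Omega_i^{p^{n-r}} = -\wp(a_i)$, using that $p^{n-1}$-th powers are injective on $\F$) to a relation of the form $\sum_{i=r+1}^n c_i \omega_i - d\,\omega_r = 0$ over $\F_p$, which forces $c_i = d = 0$ by the $\F_p$-linear independence of $\omega_1, \ldots, \omega_n$. The main technical obstacle is the iterative Artin-Schreier reduction; the remaining steps are routine algebra.
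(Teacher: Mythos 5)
Your direct construction (passing from $K$ to $K_r$ in one shot, following the pattern of Lemma~\ref{zE} and Proposition~\ref{LK1}) is a reasonable alternative to the paper's route, which is simply to iterate Proposition~\ref{LK1}: apply it to $L/K$ to get data for $L/K_1$ with $v_{K_1}(E_i)>-c$, then apply it to $L/K_1$ to get data for $L/K_2$, and so on $r$ times. That iteration goes through as a black box because the output of each application satisfies exactly the input hypotheses of the next (a $C_p^{n-j}$-extension with single break $b$, AS data with the $>-c$ bound); the worry you raise about $v_{K_{r-1}}(x_{r-1})$ is a red herring, since the ``new $\beta$'' fed into Proposition~\ref{LK1} at stage $j$ is not $x_j$ but the root of the stage-$(j-1)$ polynomial, which has $K_j$-valuation exactly $-b$. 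Steps 1--4 and 6 of your construction are correct algebra.

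The genuine gap is in step 5, which you yourself flag as the main technical point. The claim that each leading-term cancellation ``divides the valuation by $p$'' is false. If $v_{K_r}(\alpha)=-N$ with $p\mid N$ and $u\in K_r$ has $v_{K_r}(u)=-N/p$ chosen so that $u^p$ cancels the leading term, then $\alpha+\wp(u)=(\alpha+u^p)-u$ and $v_{K_r}(\alpha+u^p)\ge -N+1$, while $v_{K_r}(-u)=-N/p$. Since $-N+1<-N/p$ once $N>p/(p-1)$, the minimum is $-N+1$: one step gains only $1$ in valuation, not a factor of $p$, because $\alpha$ has a full Laurent tail and not a single monomial. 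Consequently the ``at most $r$ steps'' count is wrong, and more importantly the argument does not establish that the reduction terminates with $v_{K_r}>-c$ rather than stalling at some intermediate valuation not divisible by $p$. The fact that the AS class of $\alpha_i$ in $K_r$ does have a representative $E_i$ with $v_{K_r}(E_i)=v_K(\alpha_i)>-c$ is true, but for the ramification-theoretic reason used in Lemma~\ref{zE}(a), generalized to $K_r$: writing $z_i^p-z_i=\alpha_i$ and $m=-v_K(\alpha_i)$, the extension $K(z_i)/K$ has break $m<c<b$ while every $C_p$-subextension of $K_r/K$ has break $b$, so $K(z_i)\cap K_r=K$; restricting the lower ramification filtration of $K_r(z_i)/K$ (whose breaks are $m$ and $pb-(p-1)m$) to $\Gal(K_r(z_i)/K_r)$ shows $K_r(z_i)/K_r$ has break $m$, whence the normalized Artin--Schreier representative over $K_r$ has valuation $-m>-c$ (with the cases $\alpha_i\in\wp(K)$ and $\alpha_i\in\OO_K$ handled separately, as in Lemma~\ref{zE}(a)). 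Replacing your iterative-reduction paragraph with this argument fills the gap.

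One further caveat worth noting: as stated, your construction produces the equation $Y_i^p-Y_i=\Omega_i^{p^{n-r}}x_r+E_i$, but $v_{K_r}(x_r)=-bp^{\,r-1}$, not $-b$, for $r\ge 2$, so the triple $(x_r,\vOmega,\vE)$ is not in the normalized form the paper requires of Artin--Schreier data (the paper's implicit proof by iteration yields a new first component of $K_r$-valuation $-b$). The corollary's literal wording invites this, so it is a forgivable slip, but a fully correct write-up should renormalize the first component accordingly.
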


\begin{prop} \label{scafr}
Let $L/K$ be a $C_p^n$-extension with a single
ramification break $b$.  Let $1\le r\le n-1$, and assume
that there exists Artin-Schreier data
$(\beta,\vomega,\vepsilon)$
for $L/K$ such that $v_K(\epsilon_i)>-b/p^{n-1-r}$ for
$1\le i\le n$.  For $1\le i\le r$ let $x_i\in K^{sep}$
satisfy $x_i^p-x_i=\omega_i^{p^n}\beta+\epsilon_i$, and
set $K_r=K(x_1,\ldots,x_r)$.
Let $G_r=\Gal(L/K_r)$ and let $A_r$ be the augmentation
ideal of $R_r=\F[G_r]$.  Then $L/K_r$ has a Galois
scaffold $(\{\lambda_w\},\{\Psi_i\}_{r+1\le i\le n})$ with
infinite precision such that $\Psi_i\in R_r$ and the
image of $1+\Psi_i$ in $\Rbar_r=R_r/A_r^p$ lies in
$\Gbar_r^{[\F]}$ for $r+1\le i\le n$.
\end{prop}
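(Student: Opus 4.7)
The plan is to reduce the proposition to a direct application of Theorem~\ref{scaffold} to the subextension $L/K_r$, with the Artin-Schreier data needed to feed into that theorem supplied by Corollary~\ref{LKr}.

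First I would invoke Corollary~\ref{LKr} with the choice $c=b/p^{n-1-r}$. Since by hypothesis $v_K(\epsilon_i)>-c$ for $1\le i\le n$, the corollary produces Artin-Schreier data $(x_r,\vOmega,\vE)$ for the extension $L/K_r$ satisfying $v_{K_r}(E_i)>-b/p^{n-1-r}$ for $r+1\le i\le n$.

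Next I would verify that $L/K_r$ is a totally ramified $C_p^{n-r}$-extension whose unique lower ramification break is again $b$. This is immediate from the standard identity $(G_r)_m=G_m\cap G_r$ for subgroups in lower numbering, combined with the fact that $G_b=G$ and $G_{b+1}=1$ for $L/K$. After rewriting $n-1-r=(n-r)-1$, the bound on $v_{K_r}(E_i)$ obtained in the first step becomes exactly the hypothesis of Theorem~\ref{scaffold} applied to the $C_p^{n-r}$-extension $L/K_r$ with single break $b$.

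Finally I would apply Theorem~\ref{scaffold} to $L/K_r$ with the Artin-Schreier data $(x_r,\vOmega,\vE)$. The conclusion of that theorem gives a Galois scaffold $(\{\lambda_w\},\{\Psi_i\}_{r+1\le i\le n})$ for $L/K_r$ with infinite precision (and in fact with $u_{iw}=1$), with $\Psi_i\in\F[G_r]=R_r$, and with the image of $1+\Psi_i$ in $\Rbar_r=R_r/A_r^p$ lying in $\Gbar_r^{[\F]}$. This is precisely the statement of the proposition. The main---but mild---obstacle is purely bookkeeping: confirming that the ramification break of $L/K_r$ is still $b$ rather than some rescaled quantity, and that the exponent $b/p^{n-1-r}$ in our hypothesis lines up correctly with the exponent $b/p^{(n-r)-1}$ required by Theorem~\ref{scaffold} when that theorem is applied to the smaller extension $L/K_r$ of degree $p^{n-r}$. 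Once these identifications are in place, no further work is needed.
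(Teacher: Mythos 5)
Your proposal is correct and takes essentially the same route as the paper: invoke Corollary~\ref{LKr} to lift the Artin-Schreier data to $L/K_r$, then apply Theorem~\ref{scaffold} to the $C_p^{n-r}$-extension $L/K_r$ of degree $p^{n-r}$, noting that $n-1-r=(n-r)-1$ so the valuation bound matches the theorem's hypothesis. The paper's proof is a terser version of exactly this argument.
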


\begin{proof}
By Corollary~\ref{LKr} there is Artin-Schreier data
$(x_r,\vOmega,\vE)$ for $L/K_r$ such that
$v_{K_r}(E_i)>-b/p^{n-1-r}$ for $r+1\le i\le n$.
Since $[L:K_r]=p^{n-r}$ it follows from
Theorem~\ref{scaffold} that $L/K_r$ has a Galois
scaffold $(\{\lambda_w\},\{\Psi_i\}_{r+1\le i\le n})$
with the specified properties.
\end{proof}

\begin{remark}
Suppose there is Artin-Schreier data
$(\beta,\vomega,\vepsilon)$ for $L/K$ which satisfies
the hypotheses of Proposition~\ref{scafr}.  Let $L/K_r'$
be a subextension of $L/K$ such that $[K_r':K]=p^r$.
Then there is Artin-Schreier data
$(\beta,\vomega',\vepsilon\,')$ for $L/K$ such that:
\begin{enumerate}
\item There is $A\in\GL_n(\F_p)$ such that
$\vomega'=A\vomega$ and $\vepsilon\,'=A\vepsilon$.
\item $K_r'=K(x_1',\ldots,x_r')$ with
$x_i'^p-x_i'=\omega_i'^{p^n}\beta+\epsilon_i'$ for
$1\le i\le r$.
\end{enumerate}
It follows that $(\beta,\vomega',\vepsilon\,')$ also
satisfies the hypotheses of Proposition~\ref{scafr}.
Therefore the conclusion of Proposition~\ref{scafr} holds
for $L/K_r'$.
\end{remark}

\section{Computing refined breaks} \label{comp}

Let $L/K$ be a totally ramified $C_p^n$-extension with a
single ramification break $b$, and set $G=\Gal(L/K)$.
In \cite[Lemma~3]{necessity} it was observed that when
$n=2$ the extension $L/K$ has the following property:
There is a subgroup $H\le G$ with index $p$ such that
$L/L^H$ has a Galois scaffold consisting of elements of
$K[G]$.  This property is used in \cite{necessity} to
prove that the values of the $(\rho,p)$-refined breaks
are independent of the choice of valuation criterion
element $\rho$.  Now suppose $\ch(K)=p$ and $n\ge2$.  It
follows from Proposition~\ref{scafr} that if $L/K$ has
Artin-Schreier data satisfying \eqref{filt} with $r=2$
then $L/K$ also has this property.  In this section we
use this observation to show that for this family of
extensions the VC$_k$- and SS$_k$-refined breaks are
equal and independent of $k$.

\subsection{An equivalence condition for SS$_k$- and VC$_k$-refined breaks}

 To prove our first main result we need two basic
lemmas.

\begin{lemma} \label{aug}
Let $L/K$ be a totally ramified $C_p^n$-extension with a
single ramification break $b>0$.  Assume that $L/K$ has a Galois
scaffold $(\{\lambda_w\},\{\Psi_i\})$ with infinite
precision such that $\Psi_i\in\F[G]$ for
$1\le i\le n$.  Then the augmentation ideal $A$ of
$\F[G]$ is generated by $\Psi_1,\ldots,\Psi_n$.
\end{lemma}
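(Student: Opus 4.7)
The plan is to apply Nakayama's lemma. Since $G\cong C_p^n$ is a $p$-group, $\F[G]$ is a local ring with unique maximal ideal $A$. The presentation $\F[G]\cong\F[X_1,\ldots,X_n]/(X_1^p,\ldots,X_n^p)$, obtained by setting $X_i=\sigma_i-1$ for generators $\sigma_1,\ldots,\sigma_n$ of $G$, gives $\dim_{\F}(A/A^2)=n$. By property~(iii) of the Galois scaffold we have $\Psi_i\cdot 1=0$, so $\Psi_i\in A$. Since there are exactly $n$ elements $\Psi_i$, Nakayama's lemma reduces the problem to showing that $\Psi_1,\ldots,\Psi_n$ are $\F$-linearly independent modulo $A^2$.

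The key step is to prove that the $p^n$ products $\{\Psi^{(t)}:0\le t<p^n\}$ defined immediately before Section~3.2 form an $\F$-basis of $\F[G]$. To do this I would fix a valuation criterion element $\rho$ for $L/K$ (for instance $\rho=\lambda_w$ for any $w\equiv b\pmod{p^n}$). If $\sum_t a_t\Psi^{(t)}=0$ in $\F[G]$ with $a_t\in\F$, then applying the relation to $\rho$ and invoking equation~(\ref{Psit}) gives a sum $\sum_t a_t\Psi^{(t)}(\rho)=0$ in $L$ whose nonzero summands have the pairwise distinct valuations $v_L(\rho)+bt$. Comparing valuations forces every $a_t=0$, and since $p^n=\dim_{\F}\F[G]$, linear independence upgrades to a basis.

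Next I would stratify this basis by the digit sum $s(t)=t_{(0)}+\cdots+t_{(n-1)}$. Because $\Psi_i\in A$, the product $\Psi^{(t)}$ lies in $A^{s(t)}$, so $\{\Psi^{(t)}:s(t)\ge k\}$ is a linearly independent subset of $A^k$. Using the polynomial presentation, $\dim_{\F}A^k$ equals the number of tuples $(e_1,\ldots,e_n)$ with $0\le e_i<p$ and $\sum_i e_i\ge k$, which is exactly $|\{t:0\le t<p^n,\ s(t)\ge k\}|$. Matching dimensions forces $\{\Psi^{(t)}:s(t)\ge k\}$ to be an $\F$-basis of $A^k$. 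Specialising to $k=2$ and noting $\Psi^{(p^{n-i})}=\Psi_i$, so that $\{\Psi^{(t)}:s(t)=1\}=\{\Psi_1,\ldots,\Psi_n\}$, we conclude that $\{\Psi_1,\ldots,\Psi_n\}$ projects to an $\F$-basis of $A/A^2$, and Nakayama's lemma completes the proof.

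The main obstacle is really just the combinatorial bookkeeping that matches $|\{t:s(t)\ge k\}|$ with $\dim_{\F}A^k$; the substantive input is the scaffold's exact-shift formula~(\ref{Psit}), which is what makes the valuations $v_L(\Psi^{(t)}(\rho))$ pairwise distinct and underlies every step of the argument.
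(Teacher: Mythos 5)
Your proof is correct, but it takes a genuinely different route from the paper's. The paper tensors up to $K[G]$: it lets $\I\subset K[G]$ be the ideal generated by the $\Psi_i$, invokes the isomorphism $\eta:K[G]\to L$, $\gamma\mapsto\gamma(\rho)$ (coming from the normal basis property of the valuation criterion element $\rho$), and then observes that the $\eta(\Psi^{(s)})$ for $1\le s<p^n$ have pairwise distinct valuations mod $p^n$, forcing $\dim_K\I\ge p^n-1=\dim_K\A$ and hence $\I=\A$; descending through $A\otimes_\F K\cong\A$ and $I\otimes_\F K\cong\I$ gives $I=A$. You instead stay over $\F$ throughout: you use that $\F[G]$ is local (as the group algebra of a $p$-group in characteristic $p$), so Nakayama reduces the problem to showing $\Psi_1,\ldots,\Psi_n$ span $A/A^2$, and you then prove the stronger fact that $\{\Psi^{(t)}:0\le t<p^n\}$ is an $\F$-basis of $\F[G]$ that is adapted to the $A$-adic filtration in the sense that the subset with digit-sum $s(t)\ge k$ is a basis of $A^k$. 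Both arguments rest on the same crucial input, the exact-shift formula~(\ref{Psit}); the paper's route is shorter and leans on the normal basis isomorphism, whereas yours is more structural (it produces, as a by-product, explicit $\F$-bases of every $A^k$ built from the scaffold operators) and avoids the passage to $K[G]$, at the cost of a bit of combinatorial bookkeeping with the monomial basis of $\F[X_1,\ldots,X_n]/(X_1^p,\ldots,X_n^p)$.
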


\begin{proof}
Let $I$ denote the ideal in $\F[G]$ generated by
$\Psi_1,\ldots,\Psi_n$, and let $\I$ denote the ideal in
$K[G]$ generated by $\Psi_1,\ldots,\Psi_n$.  Let $\A$
denote the augmentation ideal of $K[G]$.  Then the
isomorphism $\F[G]\otimes_{\F}K\cong K[G]$
induces isomorphisms $A\otimes_{\F}K\cong\A$ and
$I\otimes_{\F}K\cong\I$.  Therefore it suffices to
prove that $\A=\I$.  Let $\rho$ be a valuation criterion
element of $L/K$.  Then the map $\eta:K[G]\ra L$ defined
by $\eta(\gamma)=\gamma(\rho)$ is an isomorphism of
$K$-vector spaces.  For $1\le s<p^n$ we have
$v_L(\Psi^{(s)}(\rho))=v_L(\rho)+bs$.  Therefore the
elements of $\{\eta(\Psi^{(s)}):1\le s<p^n\}$ have
$L$-valuations which represent distinct congruence
classes modulo $p^n$.  Hence
$\dim_K(\I)=\dim_K(\eta(\I))\ge p^n-1$.  Since
$\I\subset\A$ and $\dim_K(\A)=p^n-1$ it follows that
$\I=\A$.
\end{proof}

\begin{lemma} \label{spell}
Let $\sigma\in G$ and let $\alpha,\beta\in L$.  Then for
$0\le r\le p-1$ we have
\[(\sigma-1)^r(\alpha\beta)=
\alpha\cdot(\sigma-1)^r(\beta)
+\sum_{i=0}^r(-1)^{r-i}\binom{r}{i}(\sigma^i-1)(\alpha)
\cdot\sigma^i(\beta).\]
\end{lemma}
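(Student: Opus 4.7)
The plan is to prove this identity by a direct expansion using the binomial theorem in $\F[G]$. Since $\sigma$ commutes with the identity element, we have
\[(\sigma-1)^r = \sum_{j=0}^r (-1)^{r-j}\binom{r}{j}\sigma^j\]
as elements of $\F[G]$. Applying this operator to $\alpha\beta$ and using that each $\sigma^j$ is a ring automorphism of $L$, I would obtain
\[(\sigma-1)^r(\alpha\beta) = \sum_{j=0}^r (-1)^{r-j}\binom{r}{j}\sigma^j(\alpha)\,\sigma^j(\beta).\]

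Next I would split each factor $\sigma^j(\alpha)$ as $\alpha + (\sigma^j - 1)(\alpha)$ and distribute the sum. The contribution from the $\alpha$ piece gives
\[\alpha\sum_{j=0}^r (-1)^{r-j}\binom{r}{j}\sigma^j(\beta) = \alpha\cdot(\sigma-1)^r(\beta),\]
by the same binomial identity applied in reverse. The contribution from the $(\sigma^j-1)(\alpha)$ piece is exactly the claimed summation after relabeling $j$ as $i$. Adding the two contributions yields the stated formula.

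This identity is a purely formal manipulation, valid in any commutative group algebra acting on a commutative ring. The hypothesis $0\le r\le p-1$ is never invoked in the derivation; it simply reflects the range in which the lemma will be used elsewhere in the paper. Consequently, there is no substantive obstacle: the proof reduces to two applications of the binomial theorem and the multiplicativity of $\sigma^j$.
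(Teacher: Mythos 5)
Your proof is correct and is essentially the same computation as the paper's, just run in the opposite direction: you expand $(\sigma-1)^r(\alpha\beta)$ and split $\sigma^j(\alpha)=\alpha+(\sigma^j-1)(\alpha)$, while the paper starts from the right-hand sum, rewrites $(\sigma^i-1)(\alpha)\cdot\sigma^i(\beta)=\sigma^i(\alpha\beta)-\alpha\,\sigma^i(\beta)$, and applies the same binomial expansion. Your remark that the hypothesis $0\le r\le p-1$ is not actually used is also accurate; it only reflects the range of $r$ needed later.
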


\begin{proof}
We have
\begin{align*}
\sum_{i=0}^r(-1)^{r-i}\binom{r}{i}(\sigma^i-1)(\alpha)
\cdot\sigma^i(\beta)
&=\sum_{i=0}^r(-1)^{r-i}\binom{r}{i}\sigma^i(\alpha\beta)
-\alpha\sum_{i=0}^r(-1)^{r-i}\binom{r}{i}\sigma^i(\beta)
\\
&=(\sigma-1)^r(\alpha\beta)
-\alpha\cdot(\sigma-1)^r(\beta). \qedhere
\end{align*}
\end{proof}

\begin{theorem} \label{hatv}
Let $L/K$ be a $C_p^n$-extension with a single
ramification break $b>0$.  Assume that there is
Artin-Schreier data $(\beta,\vomega,\vepsilon)$ for
$L/K$ such that $v_K(\epsilon_i)>-b/p^{n-2}$ for
$1\le i\le n$.  Let $\rho$ be a valuation criterion
element for $L/K$.  Then for every $\Upsilon\in
A\smallsetminus A^2$ we have
$\hat{v}_L(\Upsilon)=v_L(\Upsilon(\rho))-v_L(\rho)$.
\end{theorem}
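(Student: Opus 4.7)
The plan is to leverage the partial scaffold for $L/K_1$ provided by the hypothesis. The condition $v_K(\epsilon_i)>-b/p^{n-2}$ is exactly that of Proposition~\ref{scafr} with $r=1$, so we obtain a Galois scaffold $(\{\lambda_w\},\{\Psi_i\}_{2\le i\le n})$ for $L/K_1$ with infinite precision and $\Psi_i\in\F[G_1]$, where $K_1=K(x_1)$ and $G_1=\Gal(L/K_1)$. Since $v_L(\rho)\equiv b\pmod{p^n}$ forces $v_L(\rho)\equiv b\pmod{p^{n-1}}$, $\rho$ is a valuation criterion element for $L/K_1$ as well; the corresponding residue $-b^{-1}v_L(\rho)\equiv p^{n-1}-1\pmod{p^{n-1}}$ has all its $n-1$ base-$p$ digits equal to $p-1$, so every digit condition for the $L/K_1$-scaffold is trivially satisfied, every $\Psi^{(t)}(\rho)$ is a $K_1^\times$-unit multiple of $\lambda_{v_L(\rho)+bt}$, and $v_L(\Psi^{(t)}(\rho))=v_L(\rho)+bt$ for $0\le t<p^{n-1}$ by the analogue of \eqref{Psit}.

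Because $\hat v_L(\Upsilon)\le v_L(\Upsilon(\rho))-v_L(\rho)$ is immediate from the definition, the task is the reverse inequality $v_L(\Upsilon(x))-v_L(x)\ge v_L(\Upsilon(\rho))-v_L(\rho)$ for every $x\in L^\times$. By Lemma~\ref{all} it suffices to check this for $x=\lambda_w$ as $w$ runs over one representative per class modulo $p^n$. The decomposition $\F[G]=\bigoplus_{k=0}^{p-1}(\sigma_1-1)^k\F[G_1]$ coming from $G\cong\langle\sigma_1\rangle\times G_1$, combined with the $\F$-basis $\{\Psi^{(t)}:0\le t<p^{n-1}\}$ of $\F[G_1]$ (linearly independent because their values on $\rho$ have distinct $L$-valuations, of cardinality $\dim_\F\F[G_1]=p^{n-1}$), gives a unique expansion $\Upsilon=\sum_{k,t}c_{k,t}(\sigma_1-1)^k\Psi^{(t)}$. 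Each summand $(\sigma_1-1)^k\Psi^{(t)}(x)$ has $v_L\ge v_L(x)+b(k+t)$ whenever $\Psi^{(t)}(x)\ne 0$, since $\sigma_1-1$ has ramification break $b$ and the shifts iterate.

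At $x=\rho$ every $\Psi^{(t)}(\rho)$ is nonzero, so $v_L(\Upsilon(\rho))-v_L(\rho)$ equals $b$ times the minimum of $k+t$ over those pairs whose leading summands do not cancel. At $x=\lambda_w$ the digit conditions on $-b^{-1}w\bmod p^{n-1}$ force some $\Psi^{(t)}(\lambda_w)$ to vanish, so the effective minimum of $k+t$ can only increase. For $w\equiv v_L(\rho)\pmod{p^{n-1}}$ property~(ii) of the scaffold gives $\lambda_w=c\lambda_{v_L(\rho)}$ with $c\in K_1^\times$, and Lemma~\ref{spell} applied to $(\sigma_1-1)^k(c\lambda_{v_L(\rho)})$---using that $\sigma_1$ has ramification break $b$ on $K_1/K$---shows the leading structure matches that of $\rho$, so any cancellation present at $\rho$ is also present at $\lambda_w$; for $w$ in other classes modulo $p^{n-1}$ the extra vanishings strictly increase the shift.

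The main obstacle is controlling when $v_L((\sigma_1-1)^k\Psi^{(t)}(x))$ strictly exceeds the naive estimate $v_L(x)+b(k+t)$: this arises when an intermediate valuation $v_L(\Psi^{(t)}(x))$ is divisible by $p$, so the next application of $\sigma_1-1$ shifts by more than $b$. Tracking these anomalous shifts alongside the genuine cancellations between distinct summands of equal expected valuation---and ensuring the match between the $\rho$- and $\lambda_w$-computations---is the delicate technical step; I would handle it by induction on the $p$-adic digits of the residue $-b^{-1}w\bmod p^{n-1}$, using the product-rule expansions from Lemma~\ref{spell} to reduce to verifications on the scaffold basis.
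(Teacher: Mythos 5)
Your setup is the same as the paper's: invoke Proposition~\ref{scafr} with $r=1$ to obtain the scaffold $(\{\lambda_w\},\{\Psi_i\}_{2\le i\le n})$ for $L/K_1$, note that $\rho$ is a valuation criterion element for $L/K_1$, use Lemma~\ref{all} to reduce to one test element per class modulo $p^n$, and expand $\Upsilon$ in the basis $\{(\sigma_1-1)^k\Psi^{(t)}\}$ of $\F[G]$. The gap is in the comparison step. You identify the right difficulty --- the anomalous shifts when an intermediate valuation is divisible by $p$, together with possible cancellations between summands --- but you leave its resolution as ``I would handle it by induction on the $p$-adic digits,'' which is not a proof, and there is real doubt the term-by-term bookkeeping closes cleanly, since the pattern of digit-vanishings and anomalous shifts at $\lambda_w$ need not dominate, digit by digit, the pattern at $\rho$.

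The paper avoids this combinatorics entirely. Pick a minimizer $\theta$ of $v_L(\Upsilon(\theta))-v_L(\theta)$; by Lemma~\ref{all} you may take $\theta=a\Psi^{(s)}(\rho)$ with $a\in K_1$ and $0\le s<p^{n-1}$ (rather than $\theta=\lambda_w$). Apply Lemma~\ref{spell} not to compare two expansions but to peel $a$ off: $(\sigma-1)^r(a\Psi^{(t)}\Psi^{(s)}(\rho)) = a(\sigma-1)^r(\Psi^{(t)}\Psi^{(s)}(\rho)) + (\text{correction})$. Because $a\in K_1$ and $K_1/K$ has ramification break $b$, every factor $(\sigma^i-1)(a)$ has $v_L\ge v_L(a)+bp^{n-1}$, so \emph{all} correction terms lie in $\theta\cdot\M_L^{bp^{n-1}}$. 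Hence $\Upsilon(\theta)\equiv a\Psi^{(s)}(\Upsilon(\rho))\pmod{\theta\cdot\M_L^{bp^{n-1}}}$, and the right side has $v_L\ge v_L(a)+bs+v_L(\Upsilon(\rho))=v_L(\theta)+\big(v_L(\Upsilon(\rho))-v_L(\rho)\big)$. The crucial final input, which you never use, is Proposition~\ref{bound} with $k=2$: since $\Upsilon\notin A^2$, the quantity $v_L(\Upsilon(\rho))-v_L(\rho)$ is at most $bp^{n-1}$, so the minimum of the two estimates is $v_L(\Upsilon(\rho))-v_L(\rho)$, completing the reverse inequality. That uniform bound $bp^{n-1}$ is exactly what lets one ignore the anomalous-shift/cancellation structure you are struggling with; without it, you would need to push through the digit-by-digit induction, and I do not see how to make that work.
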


\begin{proof}
We may assume without loss of generality that
$\omega_1=1$ and $\epsilon_1=0$.  Let $x_1\in L$ satisfy
$x_1^p-x_1=\beta$, set $K_1=K(x_1)$, and let
$G_1=\Gal(L/K_1)$.  Then by Proposition~\ref{scafr} there
is a scaffold $(\{\lambda_w\},\{\Psi_i\}_{2\le i\le n})$
for $L/K_1$ with infinite precision such that
$\Psi_i\in\F[G_1]$ for $2\le i\le n$.
Choose
$\theta\in L^{\times}$ which minimizes
$v_L(\Upsilon(\theta))-v_L(\theta)$.  Since $p\nmid b$
there is $0\le s<p^{n-1}$ such that
\[bs\equiv v_L(\theta)-v_L(\rho)\pmod{p^{n-1}}.\]
Hence there is $a\in K_1$ such that
$v_L(\theta)=v_L(a)+bs+v_L(\rho)$.
Since $v_L(\rho)\equiv b\bmod p^{n-1}$, it follows from
\eqref{Psit} that
\[v_L(\Psi^{(s)}(\rho))=v_L(\rho)+bs.\]
Thus
$v_L(\theta)=v_L(a\Psi^{(s)}(\rho))$.
By Lemma~\ref{all} we have
\[v_L(\Upsilon(a\Psi^{(s)}(\rho)))-v_L(a\Psi^{(s)}(\rho))
=v_L(\Upsilon(\theta))-v_L(\theta).\]
Hence we may assume that $\theta=a\Psi^{(s)}(\rho)$.

     Let $\sigma\in G$ be such that $\sigma|_{K_1}$
generates $\Gal(K_1/K)$.  Then by Lemma~\ref{aug} we
have
\[\Upsilon=\sum_{r=0}^{p-1}\sum_{t=0}^{p^{n-1}-1}
c_{rt}(\sigma-1)^r\Psi^{(t)}\]
for some $c_{rt}\in\F$.  Using Lemma~\ref{spell} we get
\begin{align*}
(\sigma-1)^r(\Psi^{(t)}(\theta)) 
&=(\sigma-1)^r(a\Psi^{(t)}\Psi^{(s)}(\rho)) \\
&=a(\sigma-1)^r(\Psi^{(t)}\Psi^{(s)}(\rho))
+\sum_{i=0}^r(-1)^{r-i}\binom{r}{i}(\sigma^i-1)(a)
\cdot\sigma^i(\Psi^{(t)}\Psi^{(s)}(\rho)).
\end{align*}
For $0\le i\le r$ the first factor in the $i$th term in
the sum above has $L$-valuation at least
$v_L(a)+bp^{n-1}$, and the second factor has
$L$-valuation at least $bt+v_L(\Psi^{(s)}(\rho))$.
Hence the terms in the sum all have $L$-valuations at
least
\[v_L(a)+bp^{n-1}+bt+v_L(\Psi^{(s)}(\rho))
=bp^{n-1}+bt+v_L(\theta).\]
It follows that
\begin{alignat*}{2}
(\sigma-1)^r(\Psi^{(t)}(\theta))&\equiv
a(\sigma-1)^r(\Psi^{(t)}\Psi^{(s)}(\rho))
&&\pmod{\theta\cdot\M_L^{bp^{n-1}}}
\end{alignat*}
and hence that
\begin{alignat*}{2}
\Upsilon(\theta)&\equiv a\Upsilon(\Psi^{(s)}(\rho))
&&\pmod{\theta\cdot\M_L^{bp^{n-1}}} \\
&\equiv a\Psi^{(s)}(\Upsilon(\rho))
&&\pmod{\theta\cdot\M_L^{bp^{n-1}}}.
\end{alignat*}
Considering $\Upsilon(\rho)$ to be a generic element of $L$, we have
$v_L(\Psi^{(s)}(\Upsilon(\rho)))\geq v_L(\Upsilon(\rho))+bs$.  Therefore,
\begin{align*}
v_L(\Upsilon(\theta))-v_L(\theta)
&\ge\min\{v_L(a)+bs+v_L(\Upsilon(\rho))-
(v_L(a)+v_L(\Psi^{(s)}(\rho))),bp^{n-1}\} \\
&=\min\{v_L(\Upsilon(\rho))-v_L(\rho),bp^{n-1}\} \\
&=v_L(\Upsilon(\rho))-v_L(\rho),
\end{align*}
where the last equality follows from $\Upsilon\not\in A^2$ and the
case $k=2$ of Proposition~\ref{bound}.
 We conclude that $\hat{v}_L(\Upsilon)
=v_L(\Upsilon(\rho))-v_L(\rho)$.
\end{proof}

\begin{cor} \label{same}
Let $L/K$ be a $C_p^n$-extension with a single
ramification break $b$.  Assume there is Artin-Schreier
data $(\beta,\vomega,\vepsilon)$ for $L/K$ such that
$v_K(\epsilon_i)>-b/p^{n-2}$ for $1\le i\le n$.  Then
for $2\le k\le p$ the set of VC$_k$-refined breaks of
$L/K$ is defined and equal to the set of SS$_k$-refined
breaks of $L/K$.
\end{cor}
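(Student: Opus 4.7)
The overall strategy is to show that for each valuation criterion element $\rho$ and each $\gammabar\in\Gbar^{[\F]}\setminus\{\onebar\}$, the $(\rho,k)$-refined break contributed by $\gammabar$ equals the SS$_k$-refined break contributed by $\gammabar$.  Since the SS$_k$-refined break is intrinsic to $L/K$, this forces $B_{\rho,k}$ to be independent of $\rho$ (so that the VC$_k$-refined breaks are defined) and to coincide with the set $E_k$ of SS$_k$-refined breaks, which is exactly the conclusion.

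The hypothesis is identical to that of Theorem~\ref{hatv}, which gives $\hat{v}_L(\Upsilon)=v_L(\Upsilon(\rho))-v_L(\rho)$ for every $\Upsilon\in A\setminus A^2$.  The first technical step is to check that for every $\gammabar\in\Gbar^{[\F]}\setminus\{\onebar\}$ and every representative $\gamma\in\gammabar$, the element $\gamma-1$ lies in $A\setminus A^2$.  Equivalently, the $\F$-linear composite $\Gbar^{[\F]}\xrightarrow{\Lambdabar_p}A/A^k\twoheadrightarrow A/A^2$ must be injective.  The image contains $(\sigma-1)+A^2=\ell_p(\sigma)+A^2$ for every $\sigma\in G$, and these classes $\F$-span the $n$-dimensional $\F$-vector space $A/A^2$ (with basis $\{(\sigma_i-1)+A^2\}$ for any generating set $\sigma_1,\dots,\sigma_n$ of $G$).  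Since $\dim_\F\Gbar^{[\F]}=n$, a surjection between two $n$-dimensional $\F$-vector spaces is an isomorphism, giving the required injectivity.

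Next I would match the multiplicative coset underlying $i_\rho(\gammabar)$ with the additive coset underlying $\hat{v}_L(\overline{\gamma-1})$.  Writing $\gamma'=\gamma(1+a)$ with $a\in A^k$ gives $\gamma'-1=(\gamma-1)+\gamma a\in(\gamma-1)+A^k$, and every $a'\in A^k$ has the form $\gamma a$ with $a=\gamma^{-1}a'\in A^k$, so $\{\gamma'-1:\gamma'\in\gammabar\}$ equals the additive coset $(\gamma-1)+A^k$.  Each such $\gamma'-1$ lies in $A\setminus A^2$ by the preceding step, so Theorem~\ref{hatv} yields $\hat{v}_L(\gamma'-1)=v_L((\gamma'-1)(\rho))-v_L(\rho)$ for every $\gamma'\in\gammabar$.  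Taking the maximum over $\gamma'\in\gammabar$ turns the right-hand side into $i_\rho(\gammabar)-v_L(\rho)$ (Definition~\ref{VC}) and the left-hand side into $\hat{v}_L(\overline{\gamma-1})=\hat{v}_L(\Lambdabar_p(\gammabar))$, which by (\ref{set}) is the SS$_k$-refined break of $\gammabar$.

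The only real obstacle is the bookkeeping between multiplicative and additive cosets and the verification of the $A\setminus A^2$ condition on every representative; all the analytic work has already been isolated in Theorem~\ref{hatv}, and once the injectivity step is in place the conclusion follows by tracking definitions and Remark~\ref{ineq} is not needed.
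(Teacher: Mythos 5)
Your proof is correct and fills in the (unwritten) deduction of Corollary~\ref{same} from Theorem~\ref{hatv} in essentially the way the paper intends. The two ingredients you isolate are exactly the ones needed: (i) the observation that for $\gammabar\in\Gbar^{[\F]}\setminus\{\onebar\}$ every representative $\gamma'\in\gammabar$ has $\gamma'-1\in A\smallsetminus A^2$ (via the surjection, hence isomorphism, $\Gbar^{[\F]}\to A/A^2$, using $\ell_p(\gamma)\equiv\gamma-1\pmod{A^2}$ and that both spaces are $n$-dimensional over $\F$), and (ii) the identification of the multiplicative coset $\gammabar\subset 1+A$ with the additive coset $(\gamma-1)+A^k$, so that applying Theorem~\ref{hatv} to each element and taking the maximum yields $i_\rho(\gammabar)-v_L(\rho)=\hat{v}_L(\overline{\gamma-1})=\hat{v}_L(\Lambdabar_p(\gammabar))$ term by term, using the paper's identity $\hat{v}_L(\overline{\gamma-1})=\hat{v}_L(\Lambdabar_p(\gammabar))$. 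This gives $B_{\rho,k}=E_k$ for every valuation criterion element $\rho$, which simultaneously shows that the VC$_k$-refined breaks are defined and equal the SS$_k$-refined breaks.
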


\subsection{Explicit computation of refined breaks}
By strengthening the assumption in Theorem \ref{hatv},
we can explicitly determine the values of the refined ramification breaks.
We will do so by following the process used earlier in \S3.1, as well as in
\cite[\S5]{large}.

\begin{theorem} \label{main}
Let $n\ge2$ and let $L/K$ be a $C_p^n$-extension with a
single ramification break $b$.  Let
$(\beta,\vomega,\vepsilon)$ be Artin-Schreier data for
$L/K$ such that $\omega_1=1$ and $\epsilon_1=0$.  For
$2\le i\le n$ set $e_i=-v_K(\epsilon_i)$, and assume that
$e_n<b/p^{n-2}$ and $e_i<e_n$ for $2\le i\le n-1$.  Let
\begin{align*}
b_*&=\min\{(b-e_n)p^{n-1}+b,bp^{n-1}\} \\
B_1&=\{b,bp,\ldots,bp^{n-2},b_*\}.
\end{align*}
Then for $2\le k\le p$, the set of VC$_k$-refined breaks
of $L/K$ is defined and equal to $B_1$, and the set of
SS$_k$-refined breaks of $L/K$ is equal to $B_1$.
\end{theorem}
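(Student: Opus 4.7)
The plan is to reduce via Corollary~\ref{same} to computing a single family of refined breaks, use the partial Galois scaffold of Proposition~\ref{scafr} to handle $n-1$ of them at $b, bp, \ldots, bp^{n-2}$, and identify the final break $b_*$ via explicit Artin-Schreier calculation following \S\ref{class} and \cite[\S 5]{large}.

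Since $e_1 = 0$ and $e_i \le e_n < b/p^{n-2}$ for $2 \le i \le n$, Corollary~\ref{same} applies and gives VC$_k$ $=$ SS$_k$ for $2 \le k \le p$; by Remark~\ref{ineq}, the sorted families are monotone in $k$. So it suffices to identify a single common set of refined breaks and show it equals $B_1$.

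Apply Proposition~\ref{scafr} with $r = 1$ to the tower $K \subset K_1 = K(x_1) \subset L$, producing a Galois scaffold $(\{\lambda_w\}, \{\Psi_i\}_{2 \le i \le n})$ for $L/K_1$ with $\Psi_i \in \F[G_1] \subset \F[G]$, $\hat v_L(\Psi_i) = bp^{n-i}$, and $(1 + \Psi_i) + A^p \in \Gbar^{[\F]}$. Let $\sigma \in G$ be a lift of the generator of $\Gal(K_1/K)$ sending $x_1 \mapsto x_1 + 1$ while fixing $x_2, \ldots, x_n$; then $\sigmabar$ together with $\Psibar_2, \ldots, \Psibar_n$ forms an $\F$-basis of $\Lambdabar_p(\Gbar^{[\F]})$. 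For $2 \le i \le n-1$, the element $\Psibar_i$ contributes a filtration jump at $\hat v_L = bp^{n-i}$. For the remaining two-dimensional subspace spanned by $\sigmabar$ and $\Psibar_n$ (both a priori with $\hat v_L = b$), an appropriate $\F$-linear combination -- chosen to cancel the shift-$b$ behavior in the natural filtration of $L$ -- produces the next-order break $b_*$. To compute it, apply $\sigma - 1$ to the Artin-Schreier equation of Proposition~\ref{LK1}, $\X_n^p - \X_n = -\wp(\omega_n)^{p^{n-1}} x_1 + E_n$ (with $v_L(\X_n) = -bp^{n-2}$ and $v_{K_1}(E_n) = -e_n$), and use $(\alpha - \beta)^p = \alpha^p - \beta^p$ in characteristic $p$ to obtain $((\sigma - 1)\X_n)^p - (\sigma - 1)\X_n = -\wp(\omega_n)^{p^{n-1}} + (\sigma - 1)(E_n)$. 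Since $\wp(\omega_n) \ne 0$ and $v_{K_1}((\sigma - 1)(E_n)) \ge b - e_n$ (from the $C_p$-action of $\sigma|_{K_1}$ on an element of $v_{K_1}$-valuation $-e_n$), careful valuation tracking on the corrected combination -- together with $(\sigma - 1)(x_1) = 1$ giving shift $bp^{n-1}$ -- yields $b_* = \min\{bp^{n-1}, (b - e_n)p^{n-1} + b\}$ for the surviving direction.

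The filtration of $\hat v_L$ on $\Gbar^{[\F]}$ thus has jumps at the distinct values $b, bp, \ldots, bp^{n-2}, b_*$ (distinctness via $b_* > bp^{n-2}$, equivalent to the hypothesis $e_n < b/p^{n-2}$), identifying the SS$_k$- and VC$_k$-refined breaks as $B_1$. The main obstacle will be the explicit determination of the $\F$-combination of $\sigmabar$ and $\Psibar_n$ that achieves the break $b_*$, together with the Artin-Schreier valuation analysis for $(\sigma - 1)\X_n$ needed to confirm the computed shifts; verifying that no further $A^2$-correction improves the value is also delicate in the regime $b_* < bp^{n-1}$.
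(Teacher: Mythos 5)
Your overall strategy---reduce via Corollary~\ref{same}, invoke Proposition~\ref{scafr} with $r=1$ for the scaffold over $K_1$, and supply an additional direction involving the lift $\sigma$ of $\Gal(K_1/K)$---matches the paper's outline. However, there is a concrete structural error in the identification of the element that achieves the break $b_*$, and it is fatal for $n\ge 3$. You claim this element lies in the two-dimensional subspace spanned by $\overline{\sigma}$ and $\overline{\Psi}_n$. In the paper's construction, the relevant element is
$\Psi_1=\Theta_1-1$ with
$\Theta_1=\sigma_1\Theta_n^{[-t_{n1}]}\Theta_{n-1}^{[-t_{n-1,1}]}\cdots\Theta_2^{[-t_{21}]}$,
so
\[\Psi_1\equiv(\sigma_1-1)-t_{n1}\Psi_n-t_{n-1,1}\Psi_{n-1}-\cdots-t_{21}\Psi_2\pmod{A^2},\]
and the Moore-determinant argument (Lemma~\ref{nonz}) shows $t_{j1}\neq0$ for \emph{every} $j$ with $2\le j\le n$. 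Thus the unique line in $\Gbar^{[\F]}$ realizing $b_*$ (the filtration subspace $\Gbar_{b_*}^{[\F]}$ is one-dimensional since the breaks are distinct) is \emph{not} contained in $\Span_{\F}\{\overline{\sigma-1},\overline{\Psi}_n\}$; any element of that two-plane that is independent of $\overline{\Psi}_n$ has uncancelled components along $\overline{\Psi}_2,\dots,\overline{\Psi}_{n-1}$ and therefore only attains $\hat v_L=bp$ (the smallest of those breaks), not $b_*$. For $n=2$ the two spaces coincide and your description is fine, but for $n\ge3$ the corrections by $\Psi_2,\dots,\Psi_{n-1}$ are essential.

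Two further points. First, the Artin--Schreier computation $((\sigma-1)\X_n)^p-(\sigma-1)\X_n=-\wp(\omega_n)^{p^{n-1}}+(\sigma-1)(E_n)$ is correct as far as it goes, but on its own it does not produce $\hat v_L$ of the combination; the paper instead fixes the explicit valuation criterion element $\rho=\binom{Y_1}{p-1}\cdots\binom{Y_n}{p-1}$, uses Lemma~\ref{alpha} and Vandermonde convolution to expand $\Theta_1(\rho)$, and isolates the $h=1$ and $h=n$ terms of (\ref{right}) by comparing $v_L(Z_h)$; it is the quantity $v_{K_1}(Z_n)=b-e_n$ (with $Z_h=\sum t_{ih}(\sigma_1-1)(\z_h)$, not $(\sigma-1)(E_n)$ directly) that enters the formula for $b_*$. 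Second, you explicitly defer the maximality argument (``no further $A^2$-correction improves the value''). That step is nontrivial and is handled in the paper by expanding an arbitrary $\Upsilon\in A^2$ in the monomials $\Psi^{(s)}$ and showing the only candidate $s$ with the right valuation would force $a_s=0$; without some version of this the set of VC$_k$- and SS$_k$-refined breaks is only bounded below by $B_1$, not shown equal to it.
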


\begin{remark}
Let $L/K$ be an extension which satisfies the hypotheses
of Theorem~\ref{main}.
Equation (4.20) in \cite{elem} gives a description of
$L/K$ in terms of certain parameters
$w_0,w_1,\ldots,w_{n-1}$ from $K$ satisfying
$v_K(w_i)\ge-b$ and $v_K(w_0)=-b$.  (Beware that
$w_i\not=\omega_i$.)
Assume that the
$\F_p$-span of $1=\omega_1,\omega_2,\ldots,\omega_n$
forms a subfield $\F_{p^n}$ of $\F$ with $p^n$ elements.
Then by Lemma~5.2 of \cite{elem} we
have $v_K(w_i)\ge-e_n$ for $1\le i\le n-1$.
Furthermore, using the fact that $e_i<e_n$ for
$2\le i\le n-1$ it can be shown that $v_K(w_{n-1})=-e_n$.  Assume
$b<p$, so that we can use Theorem~5.1 of \cite{elem} to
compute the indices of inseparability of $L/K$.  We get
$i_n=0$, $i_{n-1}=bp^n+\min\{-e_np^{n-1},-b\}$, and
$i_j=bp^n-b$ for $0\le j\le n-2$.  By applying
Theorem~\ref{main} we deduce that $i_j=b_j+bp^n-bp^j-b$
for $0\le i\le n-1$, where $b_0<b_1<\cdots<b_{n-1}$ are
the refined breaks of $L/K$.  We note that these results
are in agreement with the formulas relating indices of
inseparability and refined breaks for $C_p^2$-extensions
in characteristic 0 given in Theorem~4.6 of
\cite{refined}.
\end{remark}

     The proof of Theorem~\ref{main} will occupy the
rest of this section.  For any valuation criterion
element $\rho$ for $L/K$ it suffices, by Corollary~\ref{same},
to prove that for $2\le k\le p$,
 $B_1$ is the set of $(\rho,k)$-refined breaks of $L/K$.  For
 $\Upsilon\in\F[G]$ we define, depending upon $\rho$,
\begin{align*}
\hat{v}_{\rho}(\Upsilon)&=v_L(\Upsilon(\rho))-v_L(\rho) \\
\hat{v}_{\rho}(\Upsilon+A^k)&=\max\{\hat{v}_{\rho}(\Upsilon')
:\Upsilon'\in\Upsilon+A^k\}.
\end{align*}
Note that $\hat{v}_{\rho}(\Upsilon+A^k)=i_\rho(\overline{\Upsilon})$.
So to prove that $B_1$ is the set of $(\rho,k)$-refined
breaks for $2\le k\le p$ it is enough to first, construct
$\Psi_1,\ldots,\Psi_n\in\F[G]$ such that
\begin{equation} \label{B1}
B_1=\{\hat{v}_{\rho}(\Psi_i):1\le i\le n\}
\end{equation}
and second, prove
$\hat{v}_{\rho}(\Psi_i')\le \hat{v}_{\rho}(\Psi_i)$ for
all $\Psi_i'\in\Psi_i+A^2$.

\subsubsection{Construction of
$\Psi_1,\ldots,\Psi_n\in\F[G]$ satisfying \eqref{B1}}

We first separate off the case when $e_n<b/p^{n-1}$ (and
hence $b_*=bp^{n-1}$).  If
$e_n<b/p^{n-1}$ then by Theorem~\ref{scaffold} we get a scaffold
$(\{\lambda_w\},\{\Psi_i\})$ for $L/K$ such that $\Psi_i\in\F[G]$ for
$1\le i\le n$.  By (\ref{Psit}) we see that $\Psi_1,\ldots,\Psi_n$
satisfy (\ref{B1}).  Thus we may assume for the remainder of the
argument that $e_n\geq b/p^{n-1}>0$, which means, by the definition of
Artin-Schreier data, that we have $p\nmid e_n$.  Recall that
$L=K(x_1,\ldots,x_n)$, where $x_i\in K^{sep}$ satisfy
$x_1^p-x_1=\beta$, and $x_i^p-x_i=\omega_i^{p^n}\beta+\epsilon_i$ for
$2\le i\le n$.  For $0\le i\le n$ let $K_i=K(x_1,\ldots,x_i)$; then
$K=K_0$ and $L=K_n$.  Let $\sigma_1,\ldots,\sigma_n$ be generators for
$G=\Gal(L/K)$ such that $(\sigma_i-1)(x_j)=\delta_{ij}$.  As in the
proof of Proposition~\ref{LK1}, by applying Lemma~\ref{zE} to
$K(x_1,x_i)/K$ for $2\le i\le n$, we get $\z_i,E_i\in K_1$ such that
$\z_i^p-\z_i=E_i-\epsilon_i$ and $v_{K_1}(\z_i)=v_{K_1}(E_i)=-e_i$.
Set $\X_i=x_i-\omega_i^{p^{n-1}}x_1+\z_i$.  Then by Lemma~\ref{zE} we
get $\X_i^p-\X_i=-\wp(\omega_i)^{p^{n-1}}x_1+E_i$.  For $1\le i\le n$
we have $K_i=K_1(\X_2,\ldots,\X_i)$, and for $2\le i\le n$ we have
$-\omega_i^{p^{n-1}}x_1+\z_i\in K_1$.  It follows that
$(\sigma_j-1)(\X_i)=\delta_{ij}$ for $2\le i,j\le n$.

     For $2\le i\le n$ construct $\tilde{Y}_i$ for the
extension $K_i/K_1$ using the Artin-Schreier equations
\[\X_j^p-\X_j=-\wp(\omega_j)^{p^{n-1}}x_1+E_j\]
for $2\le j\le i$, just as $Y$ was constructed in
(\ref{Y}).  (Thus $n$ is replaced by $i-1$, $\beta$ is
replaced by $x_1$, and $\omega_j$ is replaced by
$-\wp(\omega_{j+1})^{p^{n-i}}$.)  Using (\ref{cofactor})
we write $\tilde{Y}_i=\tilde{t}_{i2}\X_2+\cdots
+\tilde{t}_{ii}\X_i$
with $\tilde{t}_{ij}\in\F$.  By Lemma~\ref{nonz} we
see that $\tilde{t}_{ij}\not=0$ for $2\le j\le i\le n$.
Therefore we may define $Y_i=\tilde{Y}_i/\tilde{t}_{ii}$.
Then $Y_i=t_{i2}\X_2+\cdots+t_{ii}\X_i$
with $t_{ij}=\tilde{t}_{ij}/\tilde{t}_{ii}\in
\F^{\times}$ and $t_{ii}=1$.  We also define
$Y_1=x_1$.

     For $2\le j\le i\le n$ we have
\begin{equation} \label{tij}
(\sigma_j-1)(Y_i)=\sum_{h=2}^i\,t_{ih}(\sigma_j-1)(\X_h)
=t_{ij}.
\end{equation}
Additionally, since
$\X_h=x_h-\omega_h^{p^{n-1}}x_1+\z_h$ we have
\[(\sigma_1-1)(Y_i)=\sum_{h=2}^i
t_{ih}(-\omega_h^{p^{n-1}}+(\sigma_1-1)(\z_h))\]
for $j=1\le i\le n$.
Hence $(\sigma_1-1)(Y_i)=t_{i1}+Z_i$, where
\begin{align*}
t_{i1}&=\dst-\sum_{h=2}^it_{ih}\omega_h^{p^{n-1}}
\in\F\\
Z_i&=\sum_{h=2}^it_{ih}(\sigma_1-1)(\z_h)\in K_1. 
\end{align*}
For $2\le i\le n-1$ we get
\begin{align*}
v_{K_1}(Z_i)
&\ge\min\{v_{K_1}((\sigma_1-1)(\z_h)):2\le h\le i\} \\
&\ge\min\{b-e_h:2\le h\le i\} \\
&> b-e_n.
\end{align*}

Recall that we have assumed $e_n\geq b/p^{n-1}>0$, and thus
$p\nmid e_n$. This means that
$v_{K_1}((\sigma_1-1)(\z_n))=b-e_n$.  Since
$t_{nn}\in\F^{\times}$ we get $v_{K_1}(Z_n)=b-e_n$.
Observe that
$v_{K_1}(Z_i)>0$ for $2\le i\le n$.
By (\ref{Y}) and elementary column operations we get
\begin{align*}
t_{i1}&=-\tilde{t}_{ii}^{-1}\det[\phi^{n-1}(\vomega^{(i)}),
\phi(-\phi^{n-i}(\wp(\vomega^{(i)}))),
\ldots,\phi^{i-2}(-\phi^{n-i}(\wp(\vomega^{(i)})))] \\
&=-\tilde{t}_{ii}^{-1}\det([\phi^{n-1}(\vomega^{(i)}),
\phi^{n-i+1}(\vomega^{(i)})-\phi^{n-i+2}(\vomega^{(i)}),
\ldots,\phi^{n-2}(\vomega^{(i)})-\phi^{n-1}(\vomega^{(i)})])
\\
&=-\tilde{t}_{ii}^{-1}\det([\phi^{n-1}(\vomega^{(i)}),
\phi^{n-i+1}(\vomega^{(i)}),\phi^{n-i+2}(\vomega^{(i)}),
\ldots,\phi^{n-2}(\vomega^{(i)})])
\end{align*}
where
\[\vomega^{(i)}=\begin{bmatrix}\omega_2\\
\vdots\\ \omega_i\end{bmatrix}.\]
Hence by Lemma~\ref{nonz} we have $t_{i1}\not=0$.

We are now prepared to construct $\Psi_j$ for $1\leq j\leq n$.
Following \cite[Definition 2.7]{large}, we 
define $\Theta_n,\Theta_{n-1},\ldots,\Theta_1$
iteratively by $\Theta_n=\sigma_n$ and
\[\Theta_j=\sigma_j\Theta_n^{[-t_{nj}]}
\Theta_{n-1}^{[-t_{n-1,j}]}\ldots \Theta_{j+1}^{[-t_{j+1,j}]}.\] Then
$\Theta_j\in \F[\sigma_j,\sigma_{j+1},\ldots,\sigma_n]$.  Set
$\Psi_j=\Theta_j-1$.  It remains to prove that these $\Psi_j\in \F[G]$
have the desired properties.

First we consider $\Psi_j$ for $2\leq j\leq n$.
The
scaffold for $L/K_1$ given by Proposition~\ref{scafr} has
the form $(\{\lambda_w\},\{\Psi_j\}_{2\le j\le n})$ for
some $\lambda_w\in L$.  Since $\rho$ is a valuation
criterion element for $L/K$ we have
$v_L(\rho)\equiv b\pmod{p^{n-1}}$, so $\rho$ is also a
valuation criterion element for $L/K_1$.  Since
$\Psi_j=\Psi^{(p^{n-j})}$, by equation (\ref{Psit}) we
have $v_L(\Psi_j(\rho))-v_L(\rho)=bp^{n-j}$ for
$2\le j\le n$. These $\Psi_j$ have the needed properties.

Now we consider $\Psi_1$. In checking the needed properties we may
work with any valuation criterion element $\rho\in L$. So choose
\begin{equation} \label{binom}
\rho=\binom{Y_1}{p-1}\binom{Y_2}{p-1}
\ldots\binom{Y_n}{p-1}.
\end{equation}
Indeed, since $v_L(Y_h)=-bp^{n-h}<0$, for $0\le r\le p-1$ we have
$ v_L\left(\binom{Y_h}{r}\right)=-rbp^{n-h}$.  Hence $\rho$
satisfies $v_L(\rho)=-(p^n-1)b\equiv b\pmod{p^n}$, so
$\rho$ is a valuation criterion element for $L/K$.

To compute $v_L(\Psi_1(\rho))-v_L(\rho)$, we will need certain details
from \cite{large}.  Let $0\le s<p^n$ and write
\[s=s_n+s_{n-1}p+\cdots+s_1p^{n-1}\]
with $0\le s_i<p$.  Define
\[\binom{Y}{s}=\binom{Y_n}{s_n}\binom{Y_{n-1}}{s_{n-1}}
\ldots\binom{Y_1}{s_1}.\]
Since $v_L(Y_i)=p^{n-i}v_{K_i}(Y_i)=-bp^{n-i}<0$ we have
$v_L\left(\binom{Y}{s}\right)=-bs$.

\begin{prop} \label{psijYs}
For $2\le j\le n$ we have
\[\Psi_j\left(\binom{Y}{s}\right)
=\binom{Y_n}{s_n}\ldots\binom{Y_j}{s_j-1}\ldots
\binom{Y_1}{s_1}.\]
In particular, if $s_j=0$ then
$\dst\Psi_j\left(\binom{Y}{s}\right)=0$.
\end{prop}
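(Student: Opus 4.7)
The plan is to establish the stronger formula
\[\Theta_j\bigl(\tbinom{Y}{s}\bigr)=\binom{Y_n}{s_n}\cdots\binom{Y_{j+1}}{s_{j+1}}\binom{Y_j+1}{s_j}\binom{Y_{j-1}}{s_{j-1}}\cdots\binom{Y_1}{s_1}\]
for $2\le j\le n$, from which the proposition follows immediately: subtracting $\binom{Y}{s}$ (so that $\Theta_j-1=\Psi_j$) and invoking Pascal's identity $\binom{Y_j+1}{s_j}-\binom{Y_j}{s_j}=\binom{Y_j}{s_j-1}$ gives the claim, and the case $s_j=0$ is handled by the convention $\binom{Y_j}{-1}=0$. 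In effect, we will show that $\Theta_j\in\F[G]$ acts on the monomial basis $\{\binom{Y}{s}\}$ exactly as the formal substitution $Y_j\mapsto Y_j+1$ with the other $Y_h$ fixed, even though $\Theta_j$ is not itself a ring automorphism of $L$.

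I will prove the stronger formula by downward induction on $j$. The base case $j=n$ is direct: $\Theta_n=\sigma_n$ is a genuine $K$-algebra automorphism with $\sigma_n(Y_n)=Y_n+t_{nn}=Y_n+1$ and $\sigma_n(Y_h)=Y_h$ for $h<n$ (since $Y_h\in K_h\subset K_{n-1}$), so multiplicativity and $\sigma_n(\binom{Y_h}{s_h})=\binom{\sigma_n(Y_h)}{s_h}$ give the formula. For the inductive step, assume the formula for $\Theta_{j+1},\ldots,\Theta_n$. Writing $\Theta_j=\sigma_j\prod_{i>j}\Theta_i^{[-t_{ij}]}$ (the factors commute because $G$ is abelian), I first compute the action of the product of truncated-power factors on $\binom{Y}{s}$ and then apply $\sigma_j$.

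By the inductive hypothesis, for $i>j$ we get $(\Theta_i-1)(\binom{Y}{s})=\binom{Y_i}{s_i-1}\prod_{h\ne i}\binom{Y_h}{s_h}$; this is again a basis monomial, so iterating yields $(\Theta_i-1)^k(\binom{Y}{s})=\binom{Y_i}{s_i-k}\prod_{h\ne i}\binom{Y_h}{s_h}$, and Vandermonde's identity $\sum_k\binom{\omega}{k}\binom{Y_i}{s_i-k}=\binom{Y_i+\omega}{s_i}$ then gives $\Theta_i^{[\omega]}(\binom{Y}{s})=\binom{Y_i+\omega}{s_i}\prod_{h\ne i}\binom{Y_h}{s_h}$ for every $\omega\in\F$. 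To iterate across different indices $i$, I extend this by $\F$-linearity to shifted monomials: for any $c_2,\ldots,c_n\in\F$,
\[\Theta_i^{[\omega]}\Bigl(\prod_h\binom{Y_h+c_h}{s_h}\Bigr)=\binom{Y_i+c_i+\omega}{s_i}\prod_{h\ne i}\binom{Y_h+c_h}{s_h},\]
obtained by expanding each $\binom{Y_h+c_h}{s_h}=\sum_{k_h}\binom{c_h}{k_h}\binom{Y_h}{s_h-k_h}$, applying the basic formula termwise, and recollapsing with Vandermonde. Applying $\Theta_i^{[-t_{ij}]}$ successively for $i>j$ then yields $\prod_{i>j}\binom{Y_i-t_{ij}}{s_i}\prod_{h\le j}\binom{Y_h}{s_h}$. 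Finally, $\sigma_j$ is a ring automorphism with $\sigma_j(Y_h)=Y_h+t_{hj}$ for $h\ge j$ (and $t_{jj}=1$) while $\sigma_j(Y_h)=Y_h$ for $h<j$, so applying it cancels the $-t_{ij}$ shifts for $i>j$, promotes $Y_j$ to $Y_j+1$, and leaves the remaining factors untouched, completing the inductive step. The main obstacle is precisely that the truncated powers $\Theta_i^{[-t_{ij}]}$ are not ring homomorphisms, so their effect on a product of binomial factors cannot be computed factor by factor; the Vandermonde expansion above is what makes the iteration possible by reducing each step back to the already-known action on the basis $\{\binom{Y}{s}\}$.
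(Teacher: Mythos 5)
Your proof is correct and follows essentially the same route as the paper's: downward induction on $j$, reducing the action of the truncated powers $\Theta_i^{[\omega]}$ on the monomial basis to the known action of $\Psi_i$ via Vandermonde's convolution identity, and then applying the genuine automorphism $\sigma_j$. The only cosmetic difference is that you package the iteration through an explicit multi-shift identity $\Theta_i^{[\omega]}\bigl(\prod_h\binom{Y_h+c_h}{s_h}\bigr)=\binom{Y_i+c_i+\omega}{s_i}\prod_{h\ne i}\binom{Y_h+c_h}{s_h}$, whereas the paper isolates the single-index shift as Lemma~\ref{alpha} and carries the already-shifted factors along implicitly (using that $\Theta_i^{[\omega]}$ for $i>j$ is linear over the subfield containing $Y_j,\dots,Y_{i-1}$); both are harmless reformulations of the same Vandermonde step.
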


\begin{proof}
This follows from \cite[Proposition 2.13]{large}.  We
include the proof, since it leads naturally to
Lemma~\ref{alpha}, which is needed to handle the case
$j=1$.  Use reverse induction on $j$.  Since
\[\sigma_n\left(\binom{Y_n}{s_n}\right)
=\binom{Y_n+1}{s_n}=\binom{Y_n}{s_n}+\binom{Y_n}{s_n-1}\]
we get $\dst\Psi_n\left(\binom{Y_n}{s_n}\right)=
\binom{Y_n}{s_n-1}$ and hence
\[\Psi_n\left(\binom{Y}{s}\right)
=\binom{Y_n}{s_n-1}\binom{Y_{n-1}}{s_{n-1}}
\ldots\binom{Y_1}{s_1}.\]
Let $1\le j<n$ and assume the claim holds for $j+1$.
Then
\begin{align*}
\Psi_j\left(\binom{Y_n}{s_n}\ldots
\binom{Y_j}{s_j}\right)
&=\sigma_j\Theta_n^{[-t_{nj}]}\ldots
\Theta_{j+1}^{[-t_{j+1,j}]}
\left(\binom{Y_n}{s_n}\ldots\binom{Y_j}{s_j}\right)
-\binom{Y_n}{s_n}\ldots\binom{Y_j}{s_j}.
\end{align*}
To make further progress we need a lemma.

\begin{lemma} \label{alpha}
Let $1\le j\le n-1$ and assume that the inductive
hypothesis holds for $j+1$.  Let $h$ satisfy
$j+1\le h\le n$ and let $\alpha\in\F$.  Then
\[\Theta_h^{[\alpha]}\left(\binom{Y_n}{s_n}\ldots
\binom{Y_j}{s_j}\right)
=\binom{Y_n}{s_n}\ldots\binom{Y_h+\alpha}{s_h}\ldots
\binom{Y_j}{s_j}.\]
\end{lemma}

\begin{proof}
Using the inductive hypothesis for $h$ we get
\begin{align*}
\Theta_h^{[\alpha]}\left(\binom{Y_n}{s_n}\ldots
\binom{Y_j}{s_j}\right)
&=\sum_{r=0}^{p-1}\binom{\alpha}{r}\Psi_h^r
\left(\binom{Y_n}{s_n}\ldots\binom{Y_j}{s_j}\right) \\
&=\sum_{r=0}^{s_h}\binom{\alpha}{r}
\binom{Y_n}{s_n}\ldots\binom{Y_h}{s_h-r}\ldots
\binom{Y_j}{s_j} \\
&=\binom{Y_n}{s_n}\ldots\binom{Y_h+\alpha}{s_h}\ldots
\binom{Y_j}{s_j},
\end{align*}
where the last equality follows from the Vandermonde
convolution identity.
\end{proof}

     It follows from the lemma and equation (\ref{tij})
that
\begin{align*}
\Theta_j\left(\binom{Y_n}{s_n}\ldots
\binom{Y_j}{s_j}\right)
&=\sigma_j\left(\binom{Y_n-t_{nj}}{s_n}\ldots
\binom{Y_{j+1}-t_{j+1,j}}{s_{j+1}}
\binom{Y_j}{s_j}\right) \\
&=\sigma_j\left(\binom{Y_n-t_{nj}}{s_n}\right)\ldots
\sigma_j\left(\binom{Y_{j+1}-t_{j+1,j}}{s_{j+1}}\right)
\sigma_j\left(\binom{Y_j}{s_j}\right) \\
&=\binom{Y_n}{s_n}\ldots\binom{Y_{j+1}}{s_{j+1}}
\binom{Y_j+1}{s_j}.
\end{align*}
Therefore we have
\begin{align*}
\Psi_j\left(\binom{Y_n}{s_n}\ldots
\binom{Y_j}{s_j}\right)
&=\binom{Y_n}{s_n}\ldots\binom{Y_{j+1}}{s_{j+1}}
\binom{Y_j+1}{s_j}-\binom{Y_n}{s_n}\ldots
\binom{Y_{j+1}}{s_{j+1}}\binom{Y_j}{s_j} \\
&=\binom{Y_n}{s_n}\ldots\binom{Y_{j+1}}{s_{j+1}}
\binom{Y_j}{s_j-1}
\end{align*}
It follows that
\[\Psi_j\left(\binom{Y}{s}\right)
=\binom{Y_n}{s_n}\ldots\binom{Y_j}{s_j-1}\ldots
\binom{Y_1}{s_1}.\]
This completes the proof of Proposition~\ref{psijYs}.
\end{proof}

     We now fill in the missing case of
Proposition~\ref{psijYs} by computing
$\Psi_1(\binom{Y}{s})$.  We focus on the case $s=p^n-1$
since $\Psi_1(\rho)=\Psi_1(\binom{Y}{p^n-1})$.  By
Lemma~\ref{alpha} we have
\begin{align}
\Theta_1\left(\binom{Y}{s}\right)
&=\sigma_1\Theta_n^{[-t_{n1}]}\ldots
\Theta_2^{[-t_{21}]}
\left(\binom{Y_n}{s_n}\ldots\binom{Y_1}{s_1}\right)
\nonumber \\
&=\sigma_1\left(\binom{Y_n-t_{n1}}{s_n}\ldots
\binom{Y_2-t_{n1}}{s_2}\binom{Y_1}{s_1}\right)
\nonumber \\
&=\binom{Y_n+Z_n}{s_n}\ldots\binom{Y_2+Z_2}{s_2}
\binom{Y_1+1}{s_1} \nonumber \\
&=\prod_{h=1}^n\binom{Y_h+Z_h}{s_h}, \label{YhZh}
\end{align}
where we let $Z_1=1$ for notational convenience.  It
follows from Vandermonde's convolution identity that for
$1\le h\le n$ we have
\begin{align} \nonumber
\binom{Y_h+Z_h}{p-1}
&=\sum_{r=0}^{p-1}\binom{Y_h}{p-1-r}\binom{Z_h}{r} \\
&=\binom{Y_h}{p-1}+\binom{Y_h}{p-2}\binom{Z_h}{1}+\cdots.
\label{omit}
\end{align}
Since $v_L(Y_h)<0\le v_L(Z_h)$, the terms which are
omitted from (\ref{omit}) all have larger valuation than
the two terms which are written explicitly.  It follows
from (\ref{YhZh}) that
\begin{align} \nonumber
\Theta_1\left(\binom{Y}{p^n-1}\right)
&=\prod_{h=1}^n\left(\binom{Y_h}{p-1}
+\binom{Y_h}{p-2}\binom{Z_h}{1}+\cdots\right) \\
&=\prod_{h=1}^n\binom{Y_h}{p-1}
+\sum_{h=1}^n\binom{Y_h}{p-2}\binom{Z_h}{1}
\prod_{g\not=h}\binom{Y_g}{p-1}+\cdots \nonumber \\
&=\binom{Y}{p^n-1}+\sum_{h=1}^n\binom{Y}{p^n-p^{n-h}-1}
\binom{Z_h}{1}+\cdots
\nonumber \\[.2cm]
\Psi_1\left(\binom{Y}{p^n-1}\right)\
&=\Theta_1\left(\binom{Y}{p^n-1}\right)-\binom{Y}{p^n-1}
\nonumber \\
&=\sum_{h=1}^n\binom{Y}{p^n-p^{n-h}-1}\binom{Z_h}{1}
+\cdots \label{right}
\end{align}
We claim that the valuation of
$\Psi_1(\binom{Y}{p^n-1})$ is  the minimum of the valuations of the $h=1$ and $h=n$ summands of (\ref{right}).
The valuation of the
$h$th summand is
\begin{align*}
v_L\left(\binom{Y}{p^n-p^{n-h}-1}\binom{Z_h}{1}\right)
&=v_L(\rho)-v_L(Y_h)+v_L(Z_h) \\
&=v_L(\rho)+bp^{n-h}+v_L(Z_h).
\end{align*}
For $2\le h\le n-1$ we have $v_{K_1}(Z_h)>
v_{K_1}(Z_n)$, and hence
\[bp^{n-h}+v_L(Z_h)>b+v_L(Z_n).\]
Since $p\nmid b$ and $v_L(Z_1)=0$ we have
\begin{align*}
b+v_L(Z_n)=b+(b-e_n)p^{n-1}\not=bp^{n-1}
=bp^{n-1}+v_L(Z_1).
\end{align*}
This verifies the claim.
It follows that
\begin{align} \nonumber
\hat{v}_{\rho}(\Psi_1)&=v_L(\Psi_1(\rho))-v_L(\rho) \\
&=\min\{bp^{n-1},(b-e_n)p^{n-1}+b\}. \label{Psi1}
\end{align}

\subsubsection{Proof that $\hat{v}_{\rho}(\Psi_i')\le \hat{v}_{\rho}(\Psi_i)$ for
  all $\Psi_i'\in\Psi_i+A^2$}

Assume for a contradiction that there are $1\leq i\leq n$ and
$\Psi_i'\in\Psi_i+A^2$ such that $v_L(\Psi_i'(\rho))>
v_L(\Psi_i(\rho))$.  Then there exists an element $\Upsilon\in A^2$,
namely $\Upsilon=\Psi_i-\Psi_i'$, such that
$v_L(\Upsilon(\rho))=v_L(\Psi_i(\rho))$.  Based upon the recursive
definition of the $\Theta_j$, the $\Psi_j$ generate $A$. Thus, we can
express $\Upsilon$ as a polynomial in $\Psi_1,\ldots,\Psi_n$ in which
all terms have degree at least 2.  In other words,
\[\Upsilon=\sum_{s=1}^{p^n-1}a_s\Psi^{(s)}\]
with $a_s\in\F$ for all $s$ and $a_s=0$ if $s$ is a power of $p$.
Recall that for $s=s_{(0)}+s_{(1)}p+\cdots+s_{(n-1)}p^{n-1}$ with
$0\le s_{(j)}\le p-1$, we have
$\Psi^{(s)}=\Psi_n^{s_{(0)}}\Psi_{n-1}^{s_{(1)}}\cdots
\Psi_1^{s_{(n-1)}}$.
If $p^{n-1}\le s<p^n$ then
$s_{(n-1)}\not=0$, and if $a_s\neq  0$, then $s\neq p^{n-1}$. Hence
for such $s$,
\[v_L(\Psi^{(s)}(\rho))>v_L(\Psi_1(\rho))\ge
v_L(\Psi_i(\rho)).\]
It follows that 
\[\Upsilon_0=\sum_{s=1}^{p^{n-1}-1}a_s\Psi^{(s)}\]
satisfies $v_L(\Upsilon_0(\rho))
=v_L(\Upsilon(\rho))=v_L(\Psi_i(\rho))$.

Since there is a scaffold for $L/K_1$ of the form
$(\{\lambda_w\},\{\Psi_j\}_{2\le j\le n})$, it follows from equation
(\ref{Psit}) that the valuations of the nonzero terms of
$\Upsilon_0(\rho)$ are distinct.  Hence there is $1\le s<p^{n-1}$ such
that $a_s\not=0$ and
$v_L(\Psi^{(s)}(\rho))=v_L(\Upsilon_0(\rho))=v_L(\Upsilon(\rho))=v_L(\Psi_i(\rho))$.

We consider the cases $2\le i\le n$ and $i=1$ separately.  If $2\le
i\le n$ then because $\Psi_i=\Psi^{(p^{n-i})}$, we have $s=p^{n-i}$,
which implies $a_s=0$, a contradiction.  If $i=1$, then by equations
\eqref{Psit} and \eqref{Psi1}
we get
\begin{align} \nonumber
v_L(\Psi^{(s)}(\rho))-v_L(\rho)
&=v_L(\Psi_1(\rho))-v_L(\rho) \\
sb&=\min\{bp^{n-1},(b-e_n)p^{n-1}+b\}. \label{sb}
\end{align}
Since $s<p^{n-1}$ we have
$sb\not=bp^{n-1}$.  If $sb=(b-e_n)p^{n-1}+b$ then
$sb\equiv b\pmod{p^{n-1}}$, and hence $s=1$.  Since
$a_1=0$ this is a contradiction.

We therefore conclude that
 $v_L(\Psi_i'(\rho))\le v_L(\Psi_i(\rho))$ for all
$\Psi_i'\in\Psi_i+A^2$, and thus that 
\begin{align*}
\hat{v}_{\rho}(\Psi_i+A^2)&=\hat{v}_{\rho}(\Psi_i)
=\begin{cases}
\min\{bp^{n-1},(b-e_n)p^{n-1}+b\}&(i=1) \\
bp^{n-i}&(2\le i\le n).
\end{cases}
\end{align*}
Using Corollary~\ref{same} we deduce that the set of
VC$_k$-refined breaks of $L/K$ is defined and equal to
$B_1$, and the set of SS$_k$-refined breaks of $L/K$ is
equal to $B_1$.  This completes the proof of
Theorem~\ref{main}.~\medskip

     When we specialize Theorem~\ref{main} to the case
$n=2$ the hypotheses on the $e_i$ reduce to $e_2<b$,
which holds by the definition of Artin-Schreier data.
Therefore we have the following  characteristic-$p$ analog of
\cite[Theorem 5]{necessity}.

\begin{cor}
Let $L/K$ be a totally ramified $C_p^2$-extension with
a single ramification break $b>0$ and let
$(\beta,\vomega,\vepsilon)$ be Artin-Schreier data for
$L/K$ such that $\omega_1=1$ and $\epsilon_1=0$.  Set
$b_*=\min\{(b-e_2)p+b,bp\}$ and $B_1=\{b,b_*\}$.  Then for
$2\le k\le p$ the set of VC$_k$-refined breaks of $L/K$
and the set of SS$_k$-refined breaks of $L/K$ are both
equal to $B_1$.
\end{cor}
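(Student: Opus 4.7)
The plan is to recognize that this corollary is exactly the $n=2$ case of Theorem~\ref{main}, so the proof reduces to verifying that its hypotheses hold automatically and then translating the conclusion. First I would recall the hypotheses of Theorem~\ref{main}: $e_n < b/p^{n-2}$ together with $e_i < e_n$ for $2 \le i \le n-1$. When $n=2$ the second condition is vacuous, and the first reduces to $e_2 < b$.

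Next I would verify $e_2 < b$ from the conventions on Artin-Schreier data adopted at the start of Section~\ref{ref-brk-defn}. After replacing $\epsilon_2$ by a suitable representative in $\epsilon_2 + \wp(K)$, we may assume that either $\epsilon_2 \in \F$ (in which case $e_2 \le 0 < b$) or that $0 < e_2 < b$. In either case, the hypothesis of Theorem~\ref{main} is satisfied with no extra work.

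Finally I would match conclusions. Theorem~\ref{main} gives $B_1 = \{b, bp, \ldots, bp^{n-2}, b_*\}$ with $b_* = \min\{(b-e_n)p^{n-1} + b, bp^{n-1}\}$, and asserts that both the VC$_k$- and SS$_k$-refined breaks coincide with $B_1$ for $2 \le k \le p$. Specializing to $n=2$, the list $bp, \ldots, bp^{n-2}$ introduces no new entries beyond the already-listed $b$, so $B_1$ collapses to the two-element set $\{b, b_*\}$, and $b_* = \min\{(b-e_2)p + b, bp\}$, matching the corollary's formula precisely.

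There is no genuine obstacle: the substantive work is entirely contained in the proof of Theorem~\ref{main}. The only care needed is the bookkeeping of interpreting the degenerate list $bp, \ldots, bp^{n-2}$ correctly when $n=2$, and the fact that $b_* > b$ strictly (which follows from $e_2 < b$), so that $B_1$ really does have two distinct elements.
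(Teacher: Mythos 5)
Your argument is correct and follows essentially the same route as the paper: specialize Theorem~\ref{main} to $n=2$, observe that the condition $e_i<e_n$ for $2\le i\le n-1$ is vacuous and $e_n<b/p^{n-2}$ becomes $e_2<b$, which holds automatically from the definition of Artin-Schreier data, and then read off $B_1=\{b,b_*\}$ with $b_*=\min\{(b-e_2)p+b,\,bp\}$. The paper states exactly this one-line reduction just before the corollary, so your additional bookkeeping about the degenerate list $bp,\ldots,bp^{n-2}$ and the strict inequality $b_*>b$ is a harmless elaboration rather than a different proof.
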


\section{Concluding remarks} \label{concluding}

We finish with two topics. Firstly, we discuss how our refined
breaks relate to Galois module theory and to other generalizations of
ramification data. Secondly, we discuss the class of extensions in
section~\ref{class} for which,
based upon
Corollary~\ref{same},
the VC$_2$-refined breaks are defined and
equivalent to the SS$_2$-refined breaks.

Firstly, let $2\le k\le p$, and observe that for $h\ge1$ we have
\[\{\gamma\in A:\hat{v}_L(\gamma)\ge h\}=
\bigcap_{r=0}^{p^n-1}\Ann_R(\M_L^r/\M_L^{r+h}),\]
where $\Ann_R(M)$ refers to the annihilator in $R$ of
the $R$-module $M$.  Therefore for
$\gammabar\in\Gbar^{[\F]}$ we have
$\hat{v}_L(\gammabar-\onebar)\ge h$ if and only if there
is $\gamma'\in\gammabar$ such that $\gamma'-1$ lies in
the intersection of the annihilators of
$\M_L^r/\M_L^{r+h}$ for $0\le r\le p^n-1$.  It follows
that the SS$_k$-refined breaks of $L/K$ can be computed
in terms of the Galois module structure of quotients of
$\OO_L$-ideals.  Hence any set of invariants which
completely determines the $\OO_K[G]$-module structures
of quotients of ideals in $\OO_L$ must also determine
the SS$_k$-refined breaks.  The same holds for the
$(\rho,k)$-refined breaks for any fixed valuation
criterion element $\rho$, and also for the
VC$_k$-refined breaks when they are defined.  

Restrict now to $k=2$ and suppose that the VC$_2$-refined breaks of
$L/K$ are defined and equal to the SS$_2$-refined breaks; recall the
sufficient conditions for this in Corollary~\ref{same}.  In this case
there is a tighter interpretation of the refined breaks in terms of
Galois module theory: We have $\onebar+\deltabar\in\Gbar_h^{[\F]}$ if
and only if $\delta\in\Ann_R(\M_L^b/\M_L^{b+h})+A^2$.  Hence $h$ is a
refined break of $L/K$ if and only if
\[\dim_{\F}((\Ann_R(\M_L^b/\M_L^{b+h+1})+A^2)/A^2)<
\dim_{\F}((\Ann_R(\M_L^b/\M_L^{b+h})+A^2)/A^2).\]

     We contrast these results with two others.  In
\cite{necessity} it is shown that the Galois module
structure of $\OO_L$-ideals (rather than quotients of
ideals) determines the refined breaks in some cases.
We don't know whether the Galois module structure of
$\OO_L$-ideals is enough to determine our breaks.  On
the other hand, it is not obvious that the indices of
inseparability of $L/K$ can be determined from any sort
of Galois module structure, even though the indices of
inseparability determine the refined breaks in some
cases \cite{refined}.

Now we discuss the extensions in Corollary~\ref{same}.
In the introduction to
\cite{scaf} extensions with a Galois scaffold, such as
those in section~\ref{class}, are said to be, in a
certain Galois module theory sense, as simple as
ramified cyclic extensions of degree $p$.  Indeed,
this assertion motivated their construction, an
 assertion that is now justified by
\cite{bce} where 
Galois module structure results from \cite{lara} that
were only known for cyclic extensions of degree $p$ have been generalized to
all extensions with a Galois scaffold of sufficiently
high precision.  In section~\ref{class} we introduce a
family of totally ramified $C_p^n$-extensions that
includes all totally ramified $C_p^2$-extensions with
one ramification break.  Based upon Corollary~\ref{same}, each
extension in this family can now, from another Galois module
theory perspective, be said to be as simple as a
totally ramified $C_p^2$-extension with one break.

\end{document}